%
%
%
%
%
%
\RequirePackage{fix-cm}
\documentclass[10pt]{amsart}       

%
\usepackage{amsmath, amsthm, amssymb, amsfonts, verbatim}
\usepackage{array}
\usepackage[bookmarks]{hyperref}
\usepackage[active]{srcltx}
\usepackage{amscd}
\usepackage{epsfig}
\usepackage{color}
\usepackage{comment}
\usepackage{setspace}
\usepackage{textcomp}
\usepackage{stmaryrd}
\usepackage{multirow}
%
%
%

\newcommand{\pmat}[1]{\begin{pmatrix} #1 \end{pmatrix}}

\newcommand{\beq} {\begin{equation}}
\newcommand{\eeq} {\end{equation}}
\newcommand{\bdm} {\begin{displaymath}}
\newcommand{\edm} {\end{displaymath}}
\newcommand{\bitem}[1]{\begin{itemize} #1 \end{itemize}}
\newcommand{\bit}{\begin{itemize}}
\newcommand{\eit}{\end{itemize}}
\newcommand{\bde}{\begin{description}}
\newcommand{\ede}{\end{description}}

\newcommand{\ben}{\begin{enumerate}}
\newcommand{\een}{\end{enumerate}}
\newcommand{\algn}[1]{\begin{align} #1 \end{align}}
\newcommand{\algns}[1]{\begin{align*} #1 \end{align*}}
\newcommand{\mltln}[1]{\begin{multline} #1 \end{multline}}
\newcommand{\mltlns}[1]{\begin{multline*} #1 \end{multline*}}
\newcommand{\gat}[1]{\begin{gather} #1 \end{gather}}
\newcommand{\gats}[1]{\begin{gather*} #1 \end{gather*}}
\newcommand{\barr}{\begin{array}}
\newcommand{\earr}{\end{array}}

\newcommand{\half} {\ensuremath{\frac{1}{2}}}
\newcommand{\mc}[1]{\mathcal{#1}}
\newcommand{\LRp}[1]{\left( #1 \right)}

\newcommand{\LRa}[1]{\left< #1 \right>}

\newcommand{\ra}{\rightarrow}

\newcommand{\e}{\epsilon}

\newcommand{\tred}[1]{\textcolor{black}{#1}}
\newcommand{\tblu}[1]{\textcolor{black}{#1}}

\newcommand{\R}{{\mathbb R}}

\newcommand{\V}{{\mathbb V}}
\newcommand{\K}{{\mathbb K}}

\newcommand{\M}{{\mathbb M}}
\newcommand{\X}{{\mathbb X}}
\newcommand{\I}{\Bbb{I}}

\newcommand{\grad}{\operatorname{grad}}

\renewcommand{\div}{\operatorname{div}}

\newcommand{\tr}{\operatorname{tr}}

\newcommand{\esssup}{\operatorname{esssup}}

\newcommand{\bs}{\boldsymbol}

\newcommand{\lap}{\Delta}
\newcommand{\pd}{\partial}

\newtheorem{theorem}{Theorem}[section]
 
\newtheorem{lemma}[theorem]{Lemma}
\newtheorem{cor}[theorem]{Corollary}

\newtheorem{rmk}[theorem]{Remark}
\newtheorem{eg}[theorem]{Example}

%
%
\begin{document}

\title[]{Robust error analysis of coupled mixed methods for Biot's consolidation model}\thanks{The work of Jeonghun~J.~Lee has been supported by the European Research Council under the European Union's Seventh Framework Programme (FP7/2007-2013) ERC grant agreement 339643 (PI : Prof. Ragnar Winther).} 
\author{Jeonghun J. Lee }
\address{
Department of Mathematics, University of Oslo \\
P.O. Box 1053, Blindern 0316, Oslo, Norway \\
jeonghul@math.uio.no/johnlee04@gmail.com}

\maketitle

\maketitle

\begin{abstract}
We study the a priori error analysis of finite element methods for Biot's consolidation model. We consider a formulation which has the stress tensor, the fluid flux, the solid displacement, and the pore pressure as unknowns. Two mixed finite elements, one for linear elasticity and the other for mixed Poisson problems are coupled for spatial discretization, and we show that any pair of stable mixed finite elements is available. The novelty of our analysis is that the error estimates of all the unknowns are robust for material parameters. Specifically, the analysis does not need a uniformly positive storage coefficient, and the error estimates are robust for nearly incompressible materials. Numerical experiments illustrating our theoretical analysis are included.
\end{abstract}

\section{Introduction}
Biot's consolidation model describes the deformation of an elastic porous medium and the viscous fluid flow inside it when the medium is saturated by the fluid \cite{MR0066874}. This model has many applications in various engineering fields including geomechanics, petrolieum engineering, and biomedical engineering. 

There are numerous studies of numerical schemes for Biot's consolidation model with finite element methods
. 
In a series of papers \cite{MR1156589,MR1257948,MR1393902}, Murad et al. studied a formulation with the solid displacement and the pore pressure as unknowns using mixed finite elements for the Stokes equation. A discontinuous Galerkin method for the same formulation was also studied in \cite{MR3047799}. A Galerkin least square method was proposed for a formulation with four unknowns, i.e., the solid displacement, a pseudo-stress tensor, the fluid flux, and the pore pressure \cite{MR2177147}. A formulation with the solid displacement, the fluid flux, and the pore pressure as unknowns was studied with various couplings of continuous and discontinuous Galerkin methods, and mixed finite element methods \cite{MR2327964,MR2327966,MR2461315}.  A coupling of nonconforming and mixed finite element methods for the formulation was recently studied in \cite{NUM:NUM21775}. For more information on previous studies we refer to \cite{MR3047799,lewis1998finite,NUM:NUM21775} and the references therein.

In this paper we consider a formulation with four unknowns, i.e., the stress tensor, the solid displacement, the fluid flux, the pore pressure. This was considered recently in \cite{MR3200272} using a combination of two mixed finite elements for the discretization of the problem, one for linear elasticity with symmetric stress tensors and the other for mixed Poisson problems. A numerical experiment in the paper shows that this approach can be advantageous to avoid non-physical pressure oscillations when the constrained storage coefficient of the problem vanishes. In this paper we also use a combination of two mixed finite elements for the discretization but we use mixed finite elements for linear elasticity with weakly symmetric stress because the elements with weakly symmetric stress can be advantageous with respect to efficient implementation and low computational costs. The main contribution of this paper is {\it a new error analysis providing the a priori error estimates that are robust for material parameters}. More specifically, we give error estimates of all the unknowns with the $L^\infty$ norm in time and $L^2$ norm in space, and the estimates do not need strict positivity of the constrained storage coefficient $s_0$. Moreover, as in the Hellinger--Reissner formulation of linear elasticity \cite{ADG84}, the error bounds are uniform for the parameter indicating incompressibility of the poroelastic medium, i.e., the error estimates are robust for nearly incompressible materials. To the best of our knowledge, an analytic proof of this robustness for incompressibility has not been addressed in literature.

The paper is organized as follows. In section \ref{sec:prelim} we define notation \tblu{and} derive a variational formulation of the problem. In section \ref{sec:semi} we present finite element methods for the semidiscrete problem and show the a priori error analysis of semidiscrete solutions. We also prove robustness of the error estimate for nearly incompressible materials and show well-posedness of fully discrete solutions with the backward Euler discretization in time. In section \ref{sec:numerical} numerical results illustrating our theoretical analysis are presented.

\section{Biot model and variational formulations} \label{sec:prelim}

\subsection{Notation}
Let $\Omega$ be a bounded Lipschitz domain in $\R^n$ with $n=2$ or $3$. Let $L^2(\Omega)$ be the set of square-integrable real-valued functions on $\Omega$. The inner product of $L^2(\Omega)$ and the induced norm are denoted by $(\cdot, \cdot)$ and $\| \cdot \|_0$. For a finite-dimensional inner product space $\X$, let $L^2(\Omega; \X)$ be the set of $\X$-valued functions such that each component of the functions is in $L^2(\Omega)$. The inner product of $L^2(\Omega; \X)$ is naturally defined by the inner product of $\X$ and $L^2(\Omega)$, so we use the same notation $(\cdot, \cdot)$ and $\| \cdot \|_0$ to denote the inner product and norm on $L^2(\Omega; \X)$. The inner product space $\X$ is the space of $\R^n$ vectors with standard inner product or a subspace of $n \times n$ matrices with the Frobenius inner product. For future reference we use $\M$, $\Bbb{S}$, $\K$ to denote the spaces of all, symmetric, skew-symmetric $n \times n$ matrices, respectively, and $\V$ to denote the space of column $\R^n$ vectors. 

For a nonnegative integer $m$, $H^m (\Omega)$ denotes the standard Sobolev spaces of real-valued functions based on the $L^2$ norm, and $H^m(\Omega; \Bbb{X})$ is defined similarly based on $L^2(\Omega; \X)$. For $m \geq 1$, we use ${H}_0^m(\Omega)$ to denote the subspace of $H^m(\Omega)$ with vanishing trace on $\pd \Omega$ \cite{Evans-Book}, and ${H}_0^m(\Omega; \X)$ is defined similarly. For simplicity we also use $\| \cdot \|_m$ to denote the $H^m$-norm for both of $H^m(\Omega)$ and $H^m(\Omega; \X)$. We define $H(\div, \Omega)$ as 
\algns{
H(\div, \Omega) := \{ v \in L^2(\Omega; \R^n) \;:\; \div v \in L^2(\Omega) \},
}
and the norm $(\| v \|_0^2 + \| \div v \|_0^2)^{1/2}$ is denoted by $\| v \|_{\div}$. We also define the space $H(\div, \Omega; \M)$ and $H(\div, \Omega; \Bbb{S})$ as 
\algns{
H(\div, \Omega; \M) &:= \{ \tau \in L^2(\Omega; \M) \;:\; \div \tau \in L^2(\Omega; \V) \}, \\
H(\div, \Omega; \Bbb{S}) &:= H(\div, \Omega; \M) \cap L^2(\Omega; \Bbb{S}),
}
in which the divergence of $\tau$ is understood as the row-wise divergence of $\tau$, and $\| \tau \|_{\div}$ is defined similarly as the norm of $H(\div)$. 


Let $J = [0, T]$, $T >0$ be an interval. For a reflexive Banach space $\mathcal{X}$, let $C^0 (J ; \mathcal{X})$ denote the set of functions $f : J \ra \mathcal{X}$ which are continuous in $t \in J$. For an integer $m \geq 1$, we define 
\begin{align*}
C^m (J; \mathcal{X}) = \{ f \, | \, \pd^{i}f/\pd t^{i} \in C^0(J;\mathcal{X}), \, 0 \leq i \leq m \},
\end{align*}
where $\pd^i f/\pd t^i$ is the $i$-th time derivative in the sense of the Fr\'echet derivative in $\mathcal{X}$ (see e.g., \cite{Yosida-book}). For a function $f : J \ra \mathcal{X}$, we define the space-time norm 
\begin{align*}
\| f \|_{L^r(J; \mathcal{X})} = 
\begin{cases}
\left( \int_J \| f \|_\mathcal{X}^r ds \right)^{1/r}, \quad 1 \leq r < \infty, \\
\esssup_{t \in J} \| f \|_\mathcal{X}, \quad r = \infty.
\end{cases}
\end{align*}
If the time interval is fixed, then we use $L^r \mathcal{X}$ instead of $L^r(J ; \mathcal{X})$ for simplicity. We define the space-time Sobolev spaces $W^{k,r}(J; \mathcal{X})$ for a nonnegative integer $k$ and $1 \leq r \leq \infty$ as the closure of $C^k (J; \mathcal{X})$ with the norm $\| f \|_{W^{k,r} \mathcal{X}} = \sum_{i=0}^k \| \pd^i f / \pd t^i \|_{L^r \mathcal{X}}$. 
We adopt a convention that $\| f, g \|_\mathcal{X} = \| f \|_\mathcal{X} + \| g \|_\mathcal{X}$ for the norm of a Banach space $\mathcal{X}$. 
For simplicity of notation, $\dot{f}$ will be used to denote $\pd f/ \pd t$. 



Finally, throughout this paper we use $X \lesssim Y$ to denote the inequality $X \leq cY$ with a generic constant $c>0$ which is independent of mesh sizes. 
If needed, we will write $c$ explicitly in inequalities but it can be different in each formula.

\subsection{Biot's consolidation model} 
In this subsection we derive a formulation of Biot's consolidation model with four unknowns and establish a variational formulation of the problem. 

Let $\Omega$, a bounded Lipschitz domain in $\R^n$ with $n=2$ or $3$, be occupied by a fluid-saturated poroelastic body. Let $u :\Omega \ra \V$ be the displacement of the poroelastic medium, $p : \Omega \ra \R$ the pore pressure, $f : \Omega \ra \V$ the body force, and $g : \Omega \ra \R$ the source/sink density function of the fluid. The governing equations of Biot's consolidation model are 
\begin{align}
\label{eq:2-field-eq1}  - \div \mathcal{C} \e(u) + \alpha \nabla p &= f & & \text{in } \Omega,\\
\label{eq:2-field-eq2} s_0 \dot{p} + \alpha \div \dot{u} - \div (\kappa \nabla p) &= g & & \text{in } \Omega, 
\end{align}
where $\mathcal{C}$ is the elastic stiffness tensor, $\e(u)$ is the linearized strain tensor, $s_0 \geq 0 $ is the constrained specific storage coefficient, $\kappa$ is the hydraulic conductivity tensor, and $\alpha>0$ is the Biot--Willis constant which is close to 1. 
In order to understand the system \eqref{eq:2-field-eq1}--\eqref{eq:2-field-eq2} precisely, we need to explain the operators. First, by $\grad u$ we mean the $\M$-valued function such that each row is the gradient of each component of $u : \Omega \ra \V$, and $\e(u)$ is the symmetric matrix part of $\grad u$. There is no confusion of the divergence operator $\div$ for vector-valued functions but, when it is used for $\M$-valued functions, $\div$ will be ($n$-tuples of) the row-wise divergence of the $\M$-valued function which results in a $\V$-valued function. If $q$ is a scalar function, then $\nabla q$ stands for the gradient of $q$ as a column vector. With these conventions the equations in the system \eqref{eq:2-field-eq1}--\eqref{eq:2-field-eq2} are well-defined.

In general the elastic stiffness tensor $\mathcal{C}$ is a rank $4$ tensor giving a symmetric positive definite linear map from $L^2(\Omega; \Bbb{S})$ into itself \cite{Gurtin-book-81}. The coefficient $s_0 \geq 0 $ is determined by material parameters such as the permeability (of the porous medium) and the bulk moduli of the solid and the fluid. The hydraulic conductivity tensor $\kappa$ is defined by the permeability tensor of the solid divided by the fluid viscosity and it is positive definite. All the parameters $\mc{C}$, $s_0$, $\kappa$, and $\alpha$ are functions of $x \in \Omega$. For the derivation of these equations from physical modeling, we refer to standard porous media references, for example, \cite{anandarajah2010computational,deBoer}.

To derive a formulation with four unknowns we introduce the fluid flux $z = \kappa \nabla p$ and the stress tensor $\sigma = \mathcal{C} \e(u) - \alpha p \I$ as new unknowns, where $\I$ is the $n\times n$ identity matrix. By the definitions of $\sigma$ and $z$, we have 
\begin{align}
\label{eq:4-field-eq1} \mathcal{A}^s (\sigma + \alpha p \I) - \e(u) &= 0, \\
\label{eq:4-field-eq2} \kappa^{-1} z - \nabla p  &= 0, 
\end{align}
where $\mc{A}^s = \mc{C}^{-1}$, and we can rewrite \eqref{eq:2-field-eq1} as
\begin{align}
\label{eq:4-field-eq3} - \div \sigma &= f .
\end{align}
In addition, observing that $\div {u} = \tr \e({u}) = \tr \mc{A}^s (\sigma + \alpha p \I)$ where $\tr$ is the trace of matrices, we can rewrite \eqref{eq:2-field-eq2} as 
\begin{align}
\label{eq:4-field-eq4} s_0 \dot{p} + \alpha \tr \mc{A}^s ( \dot \sigma + \alpha \dot{p} \I) + \div z &= g.
\end{align}
As a consequence, we obtain a system with four unknowns $\sigma$, $u$, $z$, $p$, and four equations \eqref{eq:4-field-eq1}--\eqref{eq:4-field-eq4}. In order to be a well-posed problem, the equations \eqref{eq:4-field-eq1}--\eqref{eq:4-field-eq4} need appropriate boundary and initial conditions. We assume that there are two partitions of $\pd \Omega$,
\begin{align*}
\pd \Omega = \Gamma_p \cup \Gamma_f, \qquad \pd \Omega = \Gamma_d \cup \Gamma_t,
\end{align*}
with $| \Gamma_p |,| \Gamma_d | > 0$, i.e., the Lebesgue measures of $\Gamma_p$ and $\Gamma_d$ are positive. Let $\bs{n}$ be the outward unit normal vector field on $\pd \Omega$. 
Boundary conditions are given in general by
\tblu{
\begin{align}
\label{eq:bc1}  
p(t) &= p_0 (t) \quad \text{ on } \Gamma_p, &z(t) \cdot \bs{n} &= z_{\bs{n}} (t) \quad \text{ on } \Gamma_f, \\
\label{eq:bc2}  
u(t) &= u_0 (t) \quad \text{ on } \Gamma_d, &\sigma(t) \bs{n} &= \sigma_{\bs{n}} (t) \quad \text{ on } \Gamma_t,
\end{align}}
for all $t \in J$ with given 
\algns{
p_0 : J \times \Gamma_p \ra \R, \quad z_{\bs{n}} : J \times \Gamma_f \ra \R, \quad u_0 : J \times \Gamma_d \ra \V, \quad \sigma_{\bs{n}} : J \times \Gamma_t \ra \V .
}
For well-posedness of \eqref{eq:4-field-eq1}--\eqref{eq:4-field-eq4} with the above boundary conditions, we can adopt the argument in \cite{MR1790411} used for well-posedness of the system \eqref{eq:2-field-eq1}--\eqref{eq:2-field-eq2} with same boundary conditions. Namely, we find expressions of $\sigma$ and $z$ in terms of $p$ using \eqref{eq:4-field-eq1}--\eqref{eq:4-field-eq3}, and apply these expressions to \eqref{eq:4-field-eq4} to obtain a parabolic partial differential equation of $p$. Since well-posedness of the system is not our main interest, we will not pursue it further in this paper.



For spatial discretization we use two mixed finite element methods, one for linear elasticity of the Hellinger--Reissner formulation and the other for mixed Poisson problems. For discretization of linear elasticity we will use mixed finite elements for elasticity with weakly symmetric stress. Compared to mixed finite elements with symmetric stress tensors, the elements with weakly symmetric stress can be preferable because they usually require less computational costs and can be implemented with the Fraejis~de~Veubeke hybridization, which results in a system with reduced sizes \cite{AB85,Fraejis}. 

In order to use mixed finite elements for elasticity with weakly symmetric stress, we introduce the skew-symmetric part of $\grad u$, denoted by $\gamma$, as another unknown. This new unknown plays a role of a Lagrange multiplier for the symmetry of the stress tensor. To formulate it, we define $\mc{A}$ as an extension of $\mc{A}^s$ such that $\mc{A} = \mc{A}^s$ on $L^2(\Omega; \mathbb{S})$ and $\mc{A}$ is a positive constant multiple of the identity map on $L^2(\Omega; \mathbb{K})$, where $\Bbb{K}$ is the space of $n \times n$ skew-symmetric matrices. Then, recalling that $\gamma$ is the skew-symmetric part of $\grad u$, \eqref{eq:4-field-eq1} can be written as 
\algn{
\label{eq:four-field-eq2} \mc{A} (\sigma + \alpha p \I) - \grad u + \gamma = 0 ,
}
and we have the symmetry constraint of $\sigma$, 
\algn{
(\sigma, \eta) = 0 \qquad \forall \eta \in L^2(\Omega; \K).
}
Let us define the function spaces 
\gat{
\label{eq:svgamma-space} \Sigma = H(\div, \Omega; \M), \qquad V = L^2(\Omega;\V), \qquad \Gamma = L^2(\Omega; \Bbb{K}), \\
\notag W = H(\div, \Omega), \qquad Q = L^2(\Omega) ,
}
for unknowns $(\sigma, u, \gamma, z, p)$. Then, by integration by parts with vanishing boundary conditions, we can derive the following variational formulation: Find 
\begin{align*}
(\sigma, p) \in C^1(J; \Sigma \times Q) \quad \text{and} \quad (u, \gamma, z) \in C^0(J; V \times \Gamma \times W), 
\end{align*}
such that 
\tblu{
\begin{align}
\label{eq:weak-eq1} (\mathcal{A} (\sigma + \alpha p \I), \tau ) + (u, \div \tau) + (\gamma, \tau) &= 0, & & \forall \tau \in \Sigma, \\
\label{eq:weak-eq2} (\div \sigma, v) + (\sigma, \eta ) &= -(f, v), & & \forall (v,\eta) \in V \times \Gamma , \\
\label{eq:weak-eq3} (\kappa^{-1} z, w) + ( p, \div w)  &= 0, & & \forall w \in W, \\
\label{eq:weak-eq4} (s_0 \dot{p}, q) + (\mc{A} ( \dot \sigma + \alpha \dot{p} \I), \alpha q \I) - (\div {z}, q) &= (g, q), & & \forall q \in Q .
\end{align} }
In \eqref{eq:weak-eq4} we used $(\tr \mc{A}^s \xi, q) = (\tr \mc{A} \xi, q) = (\mc{A} \xi, q \I)$ for a matrix $\xi$ and a scalar $q$. 

This variational formulation can be easily extended to problems with general boundary conditions. Suppose that boundary conditions are given as
\tblu{
\begin{align}
p(t) &= p_0 (t) \quad \text{ on } \Gamma_p, &z(t) \cdot \bs{n} &= z_{\bs{n}} (t) \quad \text{ on } \Gamma_f, \\
u(t) &= u_0 (t) \quad \text{ on } \Gamma_d, &\sigma(t) \bs{n} &= \sigma_{\bs{n}} (t) \quad \text{ on } \Gamma_t,
\end{align} }
with 
\algns{
p_0 : J \times \Gamma_p \ra \R, \quad z_{\bs{n}} : J \times \Gamma_f \ra \R, \quad u_0 : J \times \Gamma_d, \quad \sigma_{\bs{n}} : J \times \Gamma_t \ra \R^n .
}
We define $\Sigma_{\pd}$ and $W_{\pd}$ as 
\begin{align} \label{eq:bdy-space}
\Sigma_{\pd} = \{ \tau \in \Sigma \;:\; \tau \bs{n} = 0 \text{ on } \Gamma_t \} , \quad W_{\pd} = \{ w \in W \;:\; w \cdot \bs{n} = 0 \text{ on } \Gamma_f \} .
\end{align}
The boundary conditions $\sigma_{\bs{n}}$ and $z_{\bs{n}}$ are imposed as essential boundary conditions and we can obtain a variational formulation 
with function spaces $\Sigma_{\pd}$ and $W_{\pd}$ \cite{Braess}. As a consequence, we have a variational formulation 
\algn{
\label{eq:weak-eq1bc}(\mathcal{A} (\sigma + \alpha p \I), \tau ) + (u, \div \tau) + (\gamma, \tau) &= \LRa{ u_0, \tau \bs{n}}_{\Gamma_d} , & & \forall \tau \in \Sigma_{\pd}, \\
\label{eq:weak-eq2bc}(\div \sigma, v) + (\sigma, \eta ) &= -(f, v), & & \forall (v,\eta) \in V \times \Gamma , \\
\label{eq:weak-eq3bc} (\kappa^{-1} z, w) + ( p, \div w)  &= \LRa{ p_0, w \cdot \bs{n}}_{\Gamma_p}, & & \forall w \in W_{\pd}, \\
\label{eq:weak-eq4bc}(s_0 \dot{p}, q) + (\mc{A} ( \dot \sigma + \alpha \dot{p} \I), \alpha q \I) - (\div {z}, q) &= (g, q), & & \forall q \in Q ,
}
where $\LRa{\cdot, \cdot}_{\Gamma_d}$ and $\LRa{\cdot, \cdot}_{\Gamma_p}$ are the integrals of $L^2$ inner products on the indicated parts of boundary with the $(n-1)$-dimensional Lebesgue measure.

\section{A priori error analysis} \label{sec:semi}
In this section we consider semidiscretization of \eqref{eq:weak-eq1}--\eqref{eq:weak-eq4} and prove a priori error estimates. The discrete counterpart of \eqref{eq:weak-eq1}--\eqref{eq:weak-eq4} is to seek 
\begin{align*}
(\sigma_h, p_h) \in C^1(J; \Sigma_h \times Q_h ) \quad \text{and} \quad  (u_h, \gamma_h, z_h) \in C^0(J; V_h \times \Gamma_h \times W_h), 
\end{align*}
such that 
\begin{align}
\label{eq:disc-eq1} (\mathcal{A} (\sigma_h + \alpha p_h \I), \tau) + (u_h, \div \tau) + (\gamma_h, \tau) &= 0 & & \forall \tau\in \Sigma_h, \\
\label{eq:disc-eq2} (\div \sigma_h, v) + (\sigma_h, \eta ) &= -(f, v) & & \forall (v, \eta) \in V_h \times \Gamma_h , \\
\label{eq:disc-eq3} (\kappa^{-1} z_h, w) + ( p_h, \div w)  &= 0 & & \forall w \in W_h, \\
\label{eq:disc-eq4} (s_0 \dot{p}_h, q) + (\mc{A} ( \dot \sigma_h + \alpha \dot{p}_h \I), \alpha q \I) - (\div z_h, q) &= (g, q) & & \forall q \in Q_h,
\end{align}
for appropriate finite element spaces $\Sigma_h$, $V_h$, $W_h$, $Q_h$, $\Gamma_h$ on a shape-regular mesh of $\Omega$. Note that \tred{this is a differential algebraic equation because the time derivative is involved only in the last equation, so existence and uniqueness of its solutions are not straightforward. However, we do not discuss existence and uniqueness of solutions of this semidiscrete problem here because they can be established with standard techniques in the theory of differential algebraic equations \cite{MR1027594}. Instead, we focus on illustrating details of the a priori error analysis of semidiscrete solutions.} At the end of this section we will discuss existence and uniqueness of fully discrete solutions for the problem, which is sufficient for practical computation.


In the rest of this paper we assume that $(\Sigma_h, V_h, \Gamma_h)$ is a stable mixed method for linear elasticity with weakly symmetric stress, and $(W_h, Q_h)$ is a stable mixed method for the mixed Poisson problem. Before we describe assumptions {\bf (S1)}--{\bf (S4)} including the stability conditions of finite elements, we restrict our interest on specific combinations of $(\Sigma_h, V_h, \Gamma_h)$ and $(W_h, Q_h)$ in order to have balanced convergence rates of unknowns. To state the conditions rigorously, let $\mathcal{O}(\Xi_h)$ be the best order of approximation of $\Xi_h$ in the $L^2$-norm for a function space $\Xi \subset L^2(\Omega; \X)$ and a finite element space $\Xi_h \subset \Xi$. For example, if $\Xi = L^2(\Omega)$ and $\Xi_h$ is the space of piecewise discontinuous polynomials of degree $\leq r$ on a shape-regular mesh, then $\mc{O}(\Xi_h) = r+1$. In this paper we always assume that $(\Sigma_h, V_h, \Gamma_h, W_h, Q_h)$ satisfies
\begin{align} \label{eq:conv-rate}
\min \{ \mathcal{O}(\Sigma_h), \mathcal{O}(\Gamma_h) \} = \mathcal{O}(V_h) = \mathcal{O}(W_h) = \mathcal{O}(Q_h)= r, \quad r \geq 1,
\end{align}
and 
\algns{
\mc{O}(V_h) = r', \quad (r' \geq 1) \quad r' = r \text{ or } r' = r-1 .
}
Here are the assumptions on finite elements: 
\begin{itemize}
\item[{\bf (S1)}] $\div \Sigma_h = V_h$ and $\div W_h = Q_h$.
\item[{\bf (S2)}] There exists bounded maps $\Pi_h^{\Sigma} : H^1(\Omega; \M) \ra \Sigma_h$, $\Pi_h^W : H^1(\Omega; \V) \ra Q_h$ such that 
\gats{
\div \Pi_h^{\Sigma} \tau = P_h^V \div \tau, \qquad \div \Pi_h^W w = P_h^Q \div w , \\
\| \tau - \Pi_h^\Sigma \tau \|_0 \lesssim h^m \| \tau \|_m, \qquad \| w - \Pi_h^W w \|_0 \lesssim h^m \| w \|_m , \qquad m \geq 1,
}
where $P_h^V$ and $P_h^Q$ are the $L^2$ projections into $V_h$ and $Q_h$, respectively.
\item[{\bf (S3)}] For any $q \in Q_h$, there exists $w \in Z_h$ satisfying
\begin{align*}
\div w = q, \qquad \| w \|_{\div} \lesssim \| q \|_0 .
\end{align*}
\item[{\bf (S4)}] For any $(v, \eta ) \in V_h \times \Gamma_h$, there exists $\tau \in \Sigma_h$ satisfying
\begin{align*}
\div \tau = v, \qquad (\tau, \eta' ) = ( \eta, \eta' ) \quad \forall \eta' \in \Gamma_h, \qquad \| \tau \|_{\div} \lesssim \| v \|_0 + \| \eta \|_0.
\end{align*}
\end{itemize}
There are a number of mixed finite elements satisfying these assumptions. For the mixed Poisson problems, all of the families of Raviart--Thomas, Brezzi--Douglas--Marini, N\'{e}d\'{e}lec elements \cite{BDM85,Nedelec80,Nedelec86,RT75} fulfill these assumptions. However, we will only consider the Raviart--Thomas elements (2D) and the N\'{e}d\'{e}lec 1st kind $H(\div)$ (3D) elements for $(W_h, Q_h)$ due to \eqref{eq:conv-rate}. For the mixed form of linear elasticity, there are many known elements satisfying these assumptions \cite{ABD84,AFW07,CGG10,GG10,Sten86,Sten88}. We refer to \cite{BFBook} and \cite{Lee14a} for surveys of the elements for mixed Poisson and mixed elasticity problems, respectively.

For the a priori error analysis of the problem, we need interpolation operators into finite element spaces. As previously defined $P_h^V$ and $P_h^Q$, we define $P_h^\Gamma : \Gamma \ra \Gamma_h$ as the $L^2$ projection into $\Gamma_h$. 
For $W_h$ let $I_h^W$ be the canonical Raviart--Thomas--N\'{e}d\'{e}lec interpolation. Then it is well-known that 
\gat{ \label{eq:poisson-elliptic1}
\div I_h^W w = P_h^Q \div w , \\
\label{eq:poisson-elliptic2}
\| w - I_h^W w \|_0 \lesssim h^m \| w \|_m, \quad 1 \leq m \leq r \qquad \text{for } w \in H^m(\Omega; \V)
}
hold. For $I_h^{\Sigma}$ we use the weakly symmetric elliptic projection introduced in \cite{Arnold-Lee}. To be self-contained, we describe its definition here. 
To define $I_h^{\Sigma} : \Sigma \ra \Sigma_h$ we consider a problem seeking $(\tilde{\sigma}_h, \tilde{u}_h, \tilde{\gamma}_h) \in \Sigma_h \times V_h \times \Gamma_h$ such that 
\tblu{
\begin{align}
\label{eq:ellip1} ( \tilde{\sigma}_h, \tau) + (\tilde{u}_h, \div \tau) + (\tilde{\gamma}_h, \tau) &= ( \sigma, \tau) , & & \forall \tau \in \Sigma_h, \\
\label{eq:ellip2} (\div \tilde{\sigma}_h, v) &= (\div \sigma, v), & & \forall v \in V_h, \\
\label{eq:ellip3} (\tilde{\sigma}_h, \eta ) &= (\sigma, \eta ), & & \forall \eta \in \Gamma_h,
\end{align} }
for $\sigma \in \Sigma$. This is the discrete counterpart of a mixed form of linear elasticity problem with weakly symmetric stress. Note that the continuous version of this linear elasticity problem has $(\sigma, 0, 0)$ as its solution. If we define $I_h^{\Sigma} \sigma$ by $\tilde{\sigma}_h$, then $I_h^{\Sigma}$ is a bounded linear map from $\Sigma$ to $\Sigma_h$. 
By \eqref{eq:ellip2}, the fact $\div \Sigma_h = V_h$, and \eqref{eq:ellip3}, we obtain 
\begin{align}
\label{eq:elliptic1} \div I_h^{\Sigma} \sigma = P_h^V \div \sigma \quad \text{and} \quad ( I_h^\Sigma \sigma, \eta ) = (\sigma, \eta ) \quad \forall \eta \in \Gamma_h .
\end{align}
An improved error estimate of mixed elasticity problems yields \cite{Arnold-Lee}
\begin{align}
\label{eq:elliptic2} \| I_h^{\Sigma} \tau - \tau \|_0 \lesssim h^m \| \tau \|_m, \quad 1 \leq m \leq r \qquad \text{if } \tau \in H^m(\Omega; \M).
\end{align}

\subsection{Error analysis}
Let $(\sigma, u, \gamma, z, p)$ and $(\sigma_h, u_h, \gamma_h, z_h, p_h)$ be solutions of \eqref{eq:weak-eq1}--\eqref{eq:weak-eq4} and \eqref{eq:disc-eq1}--\eqref{eq:disc-eq4}, respectively. We define the difference of these two solutions as error terms, i.e., $e_\sigma = \sigma - \sigma_h$,
and $e_u$, $e_\gamma$, $e_z$, $e_p$ are defined similarly. For the error analysis we decompose these error terms into two components using interpolation operators. More precisely, we set 
\begin{align*}
e_{\sigma} &= e_{\sigma}^I + e_{\sigma}^h := (\sigma - I_h^{\Sigma}\sigma) + (I_h^{\Sigma}\sigma - \sigma_h), \\
e_u &= e_u^I + e_u^h := (u - P_h^V u ) + (P_h^V u - u_h), \\
e_\gamma &= e_\gamma^I + e_\gamma^h := (\gamma - P_h^{\Gamma} \gamma ) + (P_h^{\Gamma} \gamma  - \gamma_h), \\
e_z &= e_z^I + e_z^h := (z - I_h^W z ) + (I_h^W z - z_h), \\
e_p &= e_p^I + e_p^h := (p - P_h^Q p ) + (P_h^Q p - p_h) .
\end{align*}
By well-known approximation properties of the interpolation operators $P_h^V$, $P_h^Q$, $P_h^{\Gamma}$, $I_h^W$ and by \eqref{eq:elliptic2}, we have 
\begin{align} 
\label{eq:semi-sigma-err} \| e_{\sigma}^I \|_{L^\infty L^2} &\lesssim h^m \| \sigma \|_{L^\infty H^m}, & & 1 \leq m \leq r, \\
\label{eq:semi-u-err} \| e_u^I \|_{L^\infty L^2} &\lesssim h^m \|  u \|_{L^\infty H^m}, & & 1 \leq m \leq r', \\
\label{eq:semi-z-err} \| e_z^I \|_{L^\infty L^2} &\lesssim h^m \|  z \|_{L^\infty H^m}, & & 1 \leq m \leq r, \\
\label{eq:semi-p-err} \| e_p^I \|_{L^\infty L^2} &\lesssim h^m \|  p \|_{L^\infty H^m}, & & 1 \leq m \leq r, \\
\label{eq:semi-gamma-err} \| e_\gamma^I \|_{L^\infty L^2} &\lesssim h^m \| \gamma \|_{L^\infty H^m}, & & 1 \leq m \leq r.
\end{align}
The following lemma will be useful in the error analysis.
\begin{lemma} \label{lemma:imp-gronwall}
Let $F, G, X :J \ra \R$ be continuous, nonnegative functions. Suppose that $X(t)$ is continuously differentiable and satisfies
\begin{align} \label{eq:Q-ineq1}
X(t)^2 \leq X(0)^2 + \int_0^{t} (F\tred{(s)} X \tred{(s)}+ G\tred{(s)}) \,ds.
\end{align}
for all $t \in J$. Then for $t \in J$, 
\begin{align} \label{eq:diff-ineq}
X(t) \leq X(0) + \max \left \{  2 \int_0^{t} F \tred{(s)} \,ds , \,\left ( 2\int_0^{t} G \tred{(s)} \,ds \right )^\half \right \}. 
\end{align}
\end{lemma}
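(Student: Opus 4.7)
The plan is to reduce \eqref{eq:Q-ineq1} to a scalar quadratic inequality for the monotone envelope of $X$, and then split into the two regimes suggested by the $\max$ on the right-hand side of \eqref{eq:diff-ineq}.

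First I would introduce $Y(t) = \sup_{0 \le s \le t} X(s)$, which is continuous and nondecreasing because $X$ is continuous. Write $\mathcal{F}(t) = \int_0^t F(s)\,ds$ and $\mathcal{G}(t) = \int_0^t G(s)\,ds$; both are nondecreasing since $F,G \ge 0$. For any $s \le t$, the hypothesis gives
\begin{equation*}
X(s)^2 \le X(0)^2 + \int_0^s F(\tau) X(\tau)\,d\tau + \int_0^s G(\tau)\,d\tau \le X(0)^2 + Y(t)\mathcal{F}(t) + \mathcal{G}(t),
\end{equation*}
where I used $X(\tau) \le Y(t)$ for $\tau \le t$. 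Taking the supremum over $s \in [0,t]$ yields the key scalar inequality
\begin{equation*}
Y(t)^2 \le X(0)^2 + Y(t)\mathcal{F}(t) + \mathcal{G}(t).
\end{equation*}

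Next I would perform a case split, at each fixed $t$, depending on which of the two nonconstant terms on the right dominates. In the first case, suppose $Y(t)\mathcal{F}(t) \ge \mathcal{G}(t)$; then $Y(t)^2 \le X(0)^2 + 2Y(t)\mathcal{F}(t)$, which after completing the square gives $(Y(t) - \mathcal{F}(t))^2 \le X(0)^2 + \mathcal{F}(t)^2$, and the elementary bound $\sqrt{a^2+b^2} \le a + b$ for $a,b \ge 0$ yields $Y(t) \le X(0) + 2\mathcal{F}(t)$. In the opposite case $Y(t)\mathcal{F}(t) < \mathcal{G}(t)$, the inequality simplifies to $Y(t)^2 \le X(0)^2 + 2\mathcal{G}(t)$, from which $Y(t) \le X(0) + \sqrt{2\mathcal{G}(t)}$ follows by the same subadditivity of the square root. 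Since $X(t) \le Y(t)$ in either situation, the desired estimate \eqref{eq:diff-ineq} drops out.

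There is no genuine obstacle here; the only mild subtlety is recognizing that the right-hand side bound must be obtained by a case split on $Y(t)\mathcal{F}(t)$ versus $\mathcal{G}(t)$ rather than by a single application of the quadratic formula — directly solving $Y^2 - Y\mathcal{F} - X(0)^2 - \mathcal{G} \le 0$ and then using $\sqrt{a+b} \le \sqrt{a}+\sqrt{b}$ produces constants slightly worse than the sharp $(2\mathcal{F},\sqrt{2\mathcal{G}})$ pair stated in the lemma, so the dichotomy is what recovers the stated constants.
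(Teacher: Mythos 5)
Your proof is correct and follows essentially the same route as the paper's: reduce to the running maximum of $X$ and then split into two cases according to whether the $F$-term or the $G$-term dominates. The only cosmetic differences are that you package the reduction as a single quadratic inequality for $Y(t)=\sup_{0\le s\le t}X(s)$ and settle the first case by completing the square, whereas the paper argues via the statement {\bf (M)} and divides by $X(t)$ (implicitly assuming $X(t)>0$); both yield the same constants.
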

\begin{proof} To reduce the problem, let us define a statement:
\bitem{
\item[{\bf (M)}] \eqref{eq:diff-ineq} holds when $X(t)$ is the maximum on the interval $[0,t]$
}
We first claim that, if {\bf (M)} is true, then \eqref{eq:diff-ineq} holds for any $t \in J$. To see this, assume that {\bf{(M)}} is true. For given $t_0 \in J$, if $X$ attains its maximum on $[0,t_0]$ at $t_0$, then there is nothing to prove, so suppose that $X$ attains its maximum value on the inverval $[0,t_0]$ at $\bar{t} \in [0, t_0]$, $\bar{t} < t_0$. Due to {\bf (M)} and the nonnegativity of $F$ and $G$, we have 
\algns{
X(t_0) < X(\bar{t}) &\leq X(0) + \max \left \{  2 \int_0^{\bar{t}} F \tred{(s)}\,ds , \,\left ( 2\int_0^{\bar{t}} G\tred{(s)} \,ds \right )^\half \right \} \\
&\leq X(0) + \max \left \{  2 \int_0^{t_0} F \tred{(s)}\,ds , \,\left ( 2\int_0^{t_0} G \tred{(s)}\,ds \right )^\half \right \},
}
so \eqref{eq:diff-ineq} holds for $t_0$ as well. Since $t_0 \in J$ is arbitrary, \eqref{eq:diff-ineq} for all $t \in J$.

The above argument implies that it is enough to prove {\bf (M)}. Thus, we now show that \eqref{eq:diff-ineq} holds under the assumption that $X(t)$ is the maximum on the interval $[0,t]$. It is obvious that one of the following inequalities holds:
\begin{align} \label{eq:two-cases}
\int_0^{t} G\tred{(s)} \,ds  \leq \int_0^{t} F\tred{(s)} X\tred{(s)} \,ds, \qquad \int_0^{t} F\tred{(s)} X\tred{(s)} \,ds < \int_0^{t} G\tred{(s)} \,ds.
\end{align}
Suppose that the first inequality in \eqref{eq:two-cases} holds. Then \eqref{eq:Q-ineq1} gives
\begin{align*}
X(t)^2 \leq X(0)^2 + 2\int_0^{t} F\tred{(s)} X\tred{(s)} \,ds.
\end{align*}
If we divide both sides by $X(t)$, then we get
\begin{align*}
X(t) \leq X(0) + 2\int_0^{t} F\tred{(s)} \,ds , 
\end{align*}
because $X(t)$ is the maximum on $[0,t]$. Thus one case of \eqref{eq:diff-ineq} is proved. 

To complete the proof, assume that the second inequality in \eqref{eq:two-cases} is true. Then \eqref{eq:Q-ineq1} gives 
\begin{align*}
X({t})^2 \leq X(0)^2 + 2 \int_0^{t} G\tred{(s)} \,ds . 
\end{align*}
By taking square roots of both sides, and by the triangle inequality, we have
\begin{align*}
X({t}) \leq X(0) + \left( 2 \int_0^{t} G \tred{(s)}\,ds \right)^{\half},
\end{align*}
so the other case of \eqref{eq:diff-ineq} is proved. \qed
\end{proof}

We will use $\| \tau \|_{\mc{A}}$, $\| w \|_{\kappa^{-1}}$, $\| q \|_{s_0}$ to denote the quantities 
\algns{
(\mc{A} \tau , \tau)^{1/2}, \qquad (\kappa^{-1} w, w)^{1/2}, \qquad (s_0 q, q)^{1/2},
}
respectively. We also use $L_{\mc{A}}^2$ to denote the $L^2$ space with the norm $\| \cdot \|_{\mc{A}}$. 

Here is the main result of this section. 
\begin{theorem} \label{thm:semi-err} Let $(\sigma, u, \gamma, z, p)$ be an exact solution of \eqref{eq:weak-eq1}--\eqref{eq:weak-eq4} and assume that numerical initial data $\sigma_h(0)$, $p_h(0)$, $z_h(0)$ are given to satisfy
\algn{ \label{eq:ic}
\| \sigma(0) - \sigma_h(0), p(0) - p_h (0), z(0) - z_h(0) \|_0 \lesssim h^m, \qquad 1 \leq m \leq r. 
}
If $(\sigma_h, u_h, \gamma_h, z_h, p_h)$ is the solution of \eqref{eq:disc-eq1}--\eqref{eq:disc-eq4} with the numerical initial data, then, for $1 \leq m \leq r$, the following estimates hold:
\algn{
\label{eq:sigmap-estm} \| e_{\sigma}^h + \alpha e_p^h \I \|_{L^\infty L_\mc{A}^2} &\lesssim h^m \max \{ \| \sigma, p, \gamma \|_{W^{1,1} H^m}, \| z \|_{L^2 H^m} \}, \\
\label{eq:sigma-estm} \| e_{\sigma}^h \|_{L^\infty L^2} &\lesssim h^m ( \| \sigma, p, \gamma \|_{W^{1,2}H^m} + \| z \|_{W^{1,1}H^m} ), \\
\label{eq:ugamma-estm} \| e_u^h, e_{\gamma}^h \|_{L^\infty L^2} &\lesssim h^m \max \{\| \sigma, p, \gamma \|_{W^{1,1}H^m},  \| z \|_{L^2 H^m} \}, \\
\label{eq:z-estm} \| e_z^h \|_{L^\infty L^2} &\lesssim h^m ( \| \sigma, p, \gamma \|_{W^{1,2}H^m} + \| z \|_{W^{1,1}H^m}),  \\
\label{eq:p-estm} \| e_p^h \|_{L^\infty L^2} &\lesssim h^m ( \| \sigma, p, \gamma \|_{W^{1,2}H^m} + \| z \|_{W^{1,1}H^m}). 
}
Note that, for sufficiently regular solutions, $\| e_u^h \|_{L^\infty L^2} \lesssim h^r$ holds even though $r' = \mc{O}(V_h)$ is less than $r$. 
\end{theorem}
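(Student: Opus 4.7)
The plan is a mixed-method energy argument with two structural twists that deliver the advertised robustness: the combined quantity $e_\sigma^h + \alpha e_p^h \I$ is never split inside an $\mc{A}$-pairing (so the $\mc{A}$-norm measures it in a form that stays bounded as $\mc{A}$ degenerates on the trace), and no bound on $\|e_p^h\|_0$ is ever obtained through the $s_0$-weighted inner product (so the argument survives $s_0 = 0$). First I would subtract \eqref{eq:disc-eq1}--\eqref{eq:disc-eq4} from \eqref{eq:weak-eq1}--\eqref{eq:weak-eq4} restricted to discrete test functions, decompose $e = e^I + e^h$ for each unknown, and exploit the commuting relations in \textbf{(S1)}, \eqref{eq:poisson-elliptic1}, and \eqref{eq:elliptic1}. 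These give $\div e_\sigma^h \equiv 0$, $(\div e_z^h, q) = 0$ for $q \in Q_h$, and the vanishing of the bulk of the interpolation--discrete cross terms; what remains on the right-hand sides are $L^2$ pairings of $e_\sigma^I, e_p^I, e_z^I, e_\gamma^I$ (and their $t$-derivatives) against the discrete errors, to be bounded by Cauchy--Schwarz and \eqref{eq:semi-sigma-err}--\eqref{eq:semi-gamma-err}.

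For the robust estimate \eqref{eq:sigmap-estm}, I would differentiate the error form of \eqref{eq:disc-eq1} in $t$ and test with $\tau = e_\sigma^h$, test the error forms of \eqref{eq:disc-eq3} and \eqref{eq:disc-eq4} with $w = e_z^h$ and $q = e_p^h$, and sum. The cross terms $(\dot e_u^h, \div e_\sigma^h) + (\dot e_\gamma^h, e_\sigma^h)$ vanish by applying the (undifferentiated) error form of \eqref{eq:disc-eq2} pointwise in $t$ with the admissible test pair $(\dot e_u^h, \dot e_\gamma^h) \in V_h \times \Gamma_h$, and $(e_p^h, \div e_z^h)$ cancels against $-(\div e_z^h, e_p^h)$. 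What remains is
\[
\tfrac{1}{2}\tfrac{d}{dt}\bigl(\|e_\sigma^h + \alpha e_p^h \I\|_\mc{A}^2 + \|e_p^h\|_{s_0}^2\bigr) + \|e_z^h\|_{\kappa^{-1}}^2 = R(t),
\]
where the key observation is that the two $t$-derivatives combine to give the $t$-derivative of precisely the "good" energy, which remains bounded in both the degenerations $s_0 \to 0$ and $\mc{A} \to 0$ on the trace. Integrating over $[0,t]$, arranging $R$ by Cauchy--Schwarz into the form $F(s)X(s) + G(s)$ with $X = \|e_\sigma^h + \alpha e_p^h \I\|_\mc{A}$, and applying Lemma \ref{lemma:imp-gronwall} delivers \eqref{eq:sigmap-estm} together with an $L^2(J; L^2)$ bound on $e_z^h$.

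The remaining norms are extracted in turn. For \eqref{eq:ugamma-estm}, I feed $(v,\eta) = (e_u^h(t), e_\gamma^h(t))$ into \textbf{(S4)} to obtain $\tau \in \Sigma_h$ with $\div\tau = e_u^h$, $(\tau, \eta') = (e_\gamma^h, \eta')$ on $\Gamma_h$, and $\|\tau\|_{\div}\lesssim \|e_u^h, e_\gamma^h\|_0$; testing the error form of \eqref{eq:disc-eq1} with this $\tau$ bounds $\|e_u^h, e_\gamma^h\|_0^2$ in terms of $\|e_\sigma^h + \alpha e_p^h \I\|_\mc{A}$ plus interpolation errors, and the rate is $r$ rather than $r'$ precisely because only the combined $\mc{A}$-term appears. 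For \eqref{eq:z-estm} I repeat the energy argument with the alternate tests --- differentiate \eqref{eq:disc-eq1} in $t$ and test with $\dot e_\sigma^h$, differentiate \eqref{eq:disc-eq3} in $t$ and test with $e_z^h$, test \eqref{eq:disc-eq4} with $\dot e_p^h$ --- so that $\tfrac{1}{2}\tfrac{d}{dt}\|e_z^h\|_{\kappa^{-1}}^2$ emerges on the left alongside the nonnegative quantities $\|\dot e_\sigma^h + \alpha \dot e_p^h \I\|_\mc{A}^2$ and $\|\dot e_p^h\|_{s_0}^2$, with cross terms again cancelling by the $t$-differentiated \eqref{eq:disc-eq2}; Lemma \ref{lemma:imp-gronwall} then yields \eqref{eq:z-estm}. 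Next, \eqref{eq:p-estm} follows from \textbf{(S3)}: choose $w \in W_h$ with $\div w = e_p^h(t)$ and $\|w\|_{\div}\lesssim \|e_p^h\|_0$, and the error form of \eqref{eq:disc-eq3} gives $\|e_p^h\|_0 \lesssim \|e_z^h\|_0 + \|e_z^I\|_0$. Finally, the triangle inequality $\|e_\sigma^h\|_0 \leq \|e_\sigma^h + \alpha e_p^h \I\|_0 + \alpha\sqrt{n}\|e_p^h\|_0$, combined with the $\|\cdot\|_0$--$\|\cdot\|_\mc{A}$ equivalence on the fixed material, delivers \eqref{eq:sigma-estm}.

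The hard part is engineering the test choices so that \emph{every} cross term cancels without ever splitting $\sigma + \alpha p \I$: any premature splitting would force the argument to invoke coercivity of $\mc{A}$ on the full stress (which degenerates in the incompressible limit) or coercivity of the $s_0$-inner product (which fails for $s_0 = 0$), and the robustness of \eqref{eq:sigmap-estm} would be lost. A secondary subtlety is that $R(t)$ naturally contains factors such as $\|\dot e_\sigma^I\|_\mc{A}$, which must be integrable in $t$ to fit the hypothesis of Lemma \ref{lemma:imp-gronwall}; this is exactly what forces the $W^{1,1}$ and $W^{1,2}$ regularity assumptions that appear in the statement.
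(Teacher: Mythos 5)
Your proposal follows essentially the same strategy as the paper's proof: the same error-equation setup with the commuting interpolants, the same energy identity centered on the never-split quantity $\|e_\sigma^h + \alpha e_p^h \I\|_{\mc{A}}^2 + \|e_p^h\|_{s_0}^2$ obtained by time-differentiating the first error equation and testing with $(e_\sigma^h, e_z^h, e_p^h)$, the same application of Lemma~\ref{lemma:imp-gronwall}, the same inf-sup arguments via \textbf{(S4)} and \textbf{(S3)} for $(u,\gamma)$ and $p$, the same second energy identity (time-differentiated system, tested with $\dot e_\sigma^h, e_z^h, \dot e_p^h$) for $z$, and the final triangle-plus-coercivity step for $\sigma$. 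The structural observations you highlight — never split $\sigma + \alpha p\I$ under $\mc{A}$, never bound $e_p^h$ through the $s_0$-weight, and that these choices are what make the estimates survive $s_0 \to 0$ and $\lambda \to \infty$ — are exactly the points the paper relies on.
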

\begin{proof}
We begin the error analysis by taking differences of the variational form and the semidiscrete equations, which results in
\begin{align}
\label{eq:err-eq1} (\mathcal{A} (e_\sigma + \alpha e_p \I), \tau) + (e_u, \div \tau) + (e_\gamma, \tau) &= 0, & & \forall \tau\in \Sigma_h, \\
\label{eq:err-eq2} (\div e_\sigma, v) + (e_\sigma, \eta ) &= 0, & & \forall (v,\eta) \in V_h \times \Gamma_h , \\
\label{eq:err-eq3} (\kappa^{-1} e_z, w) + ( e_p, \div w)  &= 0, & & \forall w \in W_h, \\
\label{eq:err-eq4} (s_0 \dot{e}_p, q) + (\mc{A} ( \dot{e}_\sigma + \alpha \dot{e}_p \I), \alpha q \I) - (\div e_z, q) &= 0, & & \forall q \in Q_h .
\end{align}
Note that {\bf (S1)} and the definitions of $P_h^Q$, $P_h^V$ yield 
\begin{align*}
(e_p^I, \div w) = 0 \quad \forall w \in W_h  \quad \text{ and } \quad  (e_u^I, \div \tau) = 0 \quad \forall \tau \in \Sigma_h.
\end{align*}
In addition, the commuting diagram property of $I_h^W$ in {\bf (S2)} and the properties of $I_h^\Sigma$ in \eqref{eq:elliptic1} yield
\begin{align*}
(\div e_z^I, q) = 0 \quad \forall q \in Q_h \quad \text{ and }\quad  (\div e_\sigma^I, v) = (e_\sigma^I, \eta ) = 0 \quad \forall (v, \eta ) \in V_h \times \Gamma_h.
\end{align*}
Considering these cancellations and using the error decompositions with $e^I$- and $e^h$-terms, the system \eqref{eq:err-eq1}--\eqref{eq:err-eq4} can be rewritten as 
\begin{align}
\label{eq:err-eq1n} (\mathcal{A} (e_\sigma^h + \alpha {e}_p^h \I), \tau) + ({e}_u^h, \div \tau) + ({e}_\gamma^h, \tau) &=  -(\mathcal{A} (e_\sigma^I + \alpha e_p^I \I) , \tau) - (e_\gamma^I, \tau) , \\
\label{eq:err-eq2n} (\div e_\sigma^h, v) + (e_\sigma^h, \eta )&= 0, \\
\label{eq:err-eq3n} (\kappa^{-1} e_z^h, w) + ( e_p^h, \div w)  &= - (\kappa^{-1} e_z^I, w), \\
\label{eq:err-eq4n} (s_0 \dot{e}_p^h, q) + (\mc{A} ( \dot{e}_\sigma^h + \alpha \dot{e}_p^h \I), \alpha q \I) - (\div e_z^h, q) &= - (s_0 \dot{e}_p^I, q) \\
\notag & \quad - (\mc{A} ( \dot{e}_\sigma^I + \alpha \dot{e}_p^I \I), \alpha q \I), 
\end{align}
for any $(\tau, v, \eta, w, q) \in (\Sigma_h, V_h, \Gamma_h, W_h, Q_h)$. 

We remark that \eqref{eq:ic} implies that $\| e_\sigma^h(0) \|_0 \lesssim h^m$ because 
\algns{
\| e_\sigma^h (0) \|_0 &= \| I_h^{\Sigma} \sigma(0) - \sigma_h (0) \|_0 \\
&\leq \| I_h^{\Sigma} \sigma(0) - \sigma(0) \|_0 + \| \sigma(0) - \sigma_h(0) \|_0 \\
&\lesssim h^m \| \sigma(0) \|_m .
}
We obtain $\| e_p^h(0), e_z^h(0) \|_0 \lesssim h^m$ with similar arguments. 

{\bf Estimate of $\| e_\sigma^h + \alpha e_p^h \I \|_{\mc{A}}$} : We first show  
\mltln{ \label{eq:sp-estm}
\| e_\sigma^h + \alpha e_p^h \I \|_{L^\infty L_\mc{A}^2} \lesssim h^m ( \| \sigma(0), p(0) \|_m + \max \{ \| \sigma, p, \gamma \|_{L^2 H^m} , \| z \|_{L^2 H^m} \}.
}
For its proof, taking the time derivative of \eqref{eq:err-eq2n}, choosing $\tau = e_\sigma^h$, $v = \dot{e}_u^h$, $w = e_z^h$, $q = e_p^h$, $\eta = -\dot{e}_\gamma^h$, and adding all the equations \eqref{eq:err-eq1n}--\eqref{eq:err-eq4n}, we have 
\begin{multline*}
\half \frac{d}{dt} (\| e_\sigma^h + \alpha e_p^h \I \|_{\mc{A}}^2 + \| e_p^h \|_{s_0}^2) + \|  e_z^h \|_{\kappa^{-1}}^2 \\
= - (\kappa^{-1} e_z^I, e_z^h) - (\mc{A} (\dot{e}_\sigma^I + \alpha \dot{e}_p^I \I ), e_\sigma^h + \alpha e_p^h \I) - (s_0 \dot{e}_p^I, e_p^h) - (\dot{e}_\gamma^I, e_\sigma^h) .
\end{multline*}
The Cauchy--Schwarz and arithmetic-geometric mean inequalities give
\begin{multline*}
\half \frac{d}{dt} (\| e_\sigma^h + \alpha e_p^h \I \|_{\mc{A}}^2 + \| e_p^h \|_{s_0}^2) + \half \| e_z^h \|_{\kappa^{-1}}^2 \\
\leq \half \| e_z^I \|_{\kappa^{-1}}^2 + \| \dot{e}_\sigma^I + \alpha \dot{e}_p^I \I  + \dot{e}_{\gamma}^I \|_{\mc{A}} \| e_\sigma^h + \alpha e_p^h \I \|_{\mc{A}} + \| \dot{e}_p^I \|_{s_0} \| e_p^h \|_{s_0} .
\end{multline*}
In this inequality we assumed that $(\mc{A} \tau, \dot{e}_{\gamma}^I) = ( \tau, \dot{e}_{\gamma}^I)$ holds for simplicity but this equality holds with a positive constant in general, which is used in the extension of $\mc{A}^s$ to $\mc{A}$ for skew-symmetric tensors. Ignoring the nonnegative term $\| e_z^h \|_{\kappa^{-1}}^2 /2$ in the above and applying Lemma \ref{lemma:imp-gronwall} with
\algns{
X &= \LRp{ \| e_\sigma^h + \alpha e_p^h \I \|_{\mc{A}} }^2 + \| e_p^h \|_{s_0}^2)^{1/2}, \\ 
F &= \LRp{ \| \dot{e}_\sigma^I + \alpha \dot{e}_p^I \I + \dot{e}_{\gamma}^I \|_{\mc{A}}^2 + \| \dot{e}_p^I \|_{s_0}^2 }^\half, \\
G &= \half \| e_z^I \|_{\kappa^{-1}}^2,
}
we can obtain 
\algns{
\| {e}_\sigma^h + \alpha {e}_p^h \I  \|_{L^\infty L_\mc{A}^2} 
&\lesssim \| {e}_\sigma^h (0) + {e}_p^h (0) \I \|_0 + \max \{ \| \dot{e}_\sigma^I, \dot{e}_p^I, \dot{e}_{\gamma}^I \|_{L^1 L^2}, \| e_z^I \|_{L^2 L^2} \} \\
&\lesssim h^m ( \| {\sigma} (0), {p} (0) \|_m + \max \{ \| \sigma, p, \gamma \|_{W^{1,1} H^m} , \| z \|_{L^2 H^m} \}) \\
&\lesssim h^m \max \{ \| \sigma, p, \gamma \|_{W^{1,1} H^m} , \| z \|_{L^2 H^m} \} ,
}
where the last inequality is due to the Sobolev embedding $W^{1,1}H^m \subset L^\infty H^m $.

\noindent {\bf Estimates of $\| e_u^h, e_\gamma^h \|_{L^\infty L^2}$ } : By the inf-sup condition {\bf (S4)} there exists $\tau \in \Sigma_h$ such that $\div \tau = e_u^h$, $(\tau, \eta ) = (e_\gamma^h, \eta )$ for all $\eta \in \Gamma_h$, and $\| \tau \|_{\div}^2 \lesssim \| e_u^h \|_0^2 + \| e_{\gamma}^h \|_0^2$. Taking this $\tau$ in \tblu{\eqref{eq:err-eq1n}} and applying the triangle and Cauchy--Schwarz inequalities, we get
\begin{align*}
\| e_u^h \|_0^2 + \| e_\gamma^h \|_0^2 &= -(\mc{A} (e_\sigma + \alpha e_p \I + e_\gamma^I), \tau) \\
&\leq (\| e_\sigma^I + \alpha e_p^I \I + e_\gamma^I \|_{\mc{A}} + \| e_\sigma^h + \alpha e_p^h \I \|_{\mc{A}}) \| \tau \|_{\mc{A}} \\
&\lesssim (h^m \| \sigma, p, \gamma \|_{L^\infty H^m} + \| e_\sigma^h + \alpha e_p^h \I \|_{\mc{A}})  (\| e_u^h \|_0^2 + \| e_{\gamma}^h \|_0^2)^\half ,
\end{align*}
where we used $\| \tau \|_{\mc{A}} \lesssim \| \tau \|_{\div} \lesssim \LRp{\| e_u^h \|_0^2 + \| e_{\gamma}^h \|_0^2 }^{1/2}$ in the last inequality. Since $\| \sigma, p, \gamma \|_{L^\infty H^m} \lesssim \| \sigma, p, \gamma \|_{W^{1,1}H^m}$ by the Sobolev embedding, the above inequality and the estimate of $\| e_\sigma^h + \alpha e_p^h \I \|_{\mc{A}}$ yield 
\algns{
\| e_u^h,  e_\gamma^h \|_{L^\infty L^2} \lesssim h^m \max \{\| \sigma, p, \gamma \|_{W^{1,1}H^m},  \| z \|_{L^2 H^m} \} .
}
{\bf Estimate of $\| e_z^h \|_{L^\infty L^2}$} : Taking time derivatives of \tblu{\eqref{eq:err-eq1n}--\eqref{eq:err-eq3n}}, choosing $\tau = \dot{e}_\sigma^h$, \tblu{$v = -\dot{e}_u^h$}, $w = e_z^h$, $q = \dot{e}_p^h$, \tblu{$\eta = - \dot{e}_\gamma^h$}, and adding all the equations, we have
\begin{multline*}
\half \frac{d}{dt} (\kappa^{-1} e_z^h, e_z^h) + \| \dot{e}_\sigma^h + \alpha \dot{e}_p^h \I \|_{\mc{A}}^2 + \| \dot{e}_p^h \|_{s_0}^2 \\
= - (\kappa^{-1} \dot{e}_z^I, e_z^h) - (\mc{A} (\dot{e}_\sigma^I + \alpha \dot{e}_p^I \I + \dot{e}_\gamma^I ), \dot{e}_\sigma^h + \alpha \dot{e}_p^h) - (s_0 \dot{e}_p^I, \dot{e}_p^h).
\end{multline*}
By the Cauchy--Schwarz and Young's inequalities we have
\begin{align*}
\half \frac{d}{dt} \|  e_z^h \|_{\kappa^{-1}}^2 \leq  \| \dot{e}_z^I \|_{\kappa^{-1}} \| e_z^h \|_{\kappa^{-1}} + \| \dot{e}_\sigma^I + \alpha \dot{e}_p^I \I + \dot{e}_\gamma^I \|_{\mc{A}}^2 + \| \dot{e}_p^I \|_{s_0}^2.
\end{align*}
If we apply Lemma \ref{lemma:imp-gronwall} to this inequality with 
\algns{
X = \| e_z^h \|_{\kappa^{-1}}, \qquad F = \| \dot{e}_z^I \|_{\kappa^{-1}}, \qquad G = \| \dot{e}_\sigma^I + \alpha \dot{e}_p^I \I + \dot{e}_\gamma^I \|_{\mc{A}}^2 + \| \dot{e}_p^I \|_{s_0}^2,
}
then the coercivity of $\kappa^{-1}$, boundedness of $\mc{A}$, and the Sobolev embedding will lead to 
\begin{align*}
\| e_z^h \|_{L^\infty L^2} 
&\lesssim \| e_z^h(0) \|_0 + \| \dot{e}_z^I \|_{L^1 L^2} + \| \dot{e}_\sigma^I, \dot{e}_p^I, \dot{e}_\gamma^I \|_{L^2 L^2} \\
&\lesssim h^m (\| \sigma, p, \gamma \|_{W^{1,2}H^m} + \| z \|_{W^{1,1}H^m}).
\end{align*}
{\bf Estimate of $\| e_p^h \|_{L^\infty L^2}$} : By the inf-sup condition in {\bf (S3)} for $(W_h, Q_h)$, there exists $w \in W_h$ such that $\div w = e_p^h$ and $\| w \|_{\div} \lesssim \| e_p^h \|_0$. Choosing this $w$ in \eqref{eq:err-eq3n}, we get 
\begin{align*}
\| e_p^h \|_0^2 = (e_z^I + e_z^h, w)_{\kappa^{-1}} \lesssim (\| e_z^I \|_0 + \| e_z^h \|_0) \| e_p^h \|_0,
\end{align*}
so the estimate of $e_z^h$ and \eqref{eq:semi-z-err} give 
\algns{
\| e_p^h \|_{L^\infty L^2} \lesssim h^m (\| \sigma, p, \gamma \|_{W^{1,2}H^m} + \| z \|_{W^{1,1}H^m}) .
}
{\bf Estimate of $\| e_{\sigma}^h \|_{L^\infty L^2}$} : Combining the estimates of $\| e_\sigma^h + \alpha e_p^h \I \|_{\mc{A}}$ and $\| e_p^h \|_{L^\infty L^2}$, the triangle inequality, and coercivity of $\mc{A}$, we have 
\algns{
\| e_{\sigma}^h \|_{L^\infty L^2} \lesssim h^m ( \| \sigma, p, \gamma \|_{W^{1,2}H^m} + \| z \|_{W^{1,1}H^m}) .
}
The proof is completed. \qed
\end{proof}
\begin{rmk} The quantity \tblu{$(\sigma + \alpha p \I)$} has a physical meaning as the \tblu{elastic} stress tensor in the saturated poroelastic medium. The estimate of $\| e_{\sigma}^h + \alpha e_p^h \I \|_{L^\infty L_{\mc{A}}^2}$ plays a key role that any combinations of two mixed methods $(\Sigma_h, V_h, \Gamma_h)$ and $(W_h, Q_h)$ are available because this estimate holds without any requirements on finite elements. In \tblu{this} estimate, \tblu{an} estimate of $\| e_p^h \|_{L^\infty L^2}$ can be obtained when $s_0$ is uniformly positive, and an estimate of $\| e_z^h \|_{L^2 L^2}$ can be also obtained.
\end{rmk}
\begin{rmk}
The argument in the estimate of $\| e_z^h \|_{L^\infty L^2}$ can be also used for other formulations that the fluid flux and the pore pressure are present as unknowns \cite{MR2327966,NUM:NUM21775}.
\end{rmk}

If we combine \eqref{eq:semi-sigma-err}--\eqref{eq:semi-gamma-err} and the result of Theorem~\ref{thm:semi-err}, then we have the following results. 
\begin{cor} \label{thm:full-err} Suppose that $(\sigma, u, \gamma, z, p)$ and $(\sigma_h, u_h, \gamma_h, z_h, p_h)$ are as in Theorem~\ref{thm:semi-err} with same assumptions. Then 
we have 
\algns{
\| e_{\sigma} + \alpha e_p \I \|_{L^\infty L_\mc{A}^2} &\lesssim h^m \max \{ \| \sigma, p, \gamma \|_{W^{1,1} H^m}, \| z \|_{L^2 H^m} \}, & & 1 \leq m \leq r, \\
\| e_{\sigma} \|_{L^\infty L^2} &\lesssim h^m ( \| \sigma, p, \gamma \|_{W^{1,2}H^m} + \| z \|_{W^{1,1}H^m} ), & & 1 \leq m \leq r, \\
\| e_u \|_{L^\infty L^2} &\lesssim h^m \max \{\| \sigma, p, \gamma \|_{W^{1,1}H^m},  \| z \|_{L^2 H^m} \}, & & 1 \leq m \leq r', \\
\| e_{\gamma} \|_{L^\infty L^2} &\lesssim h^m \max \{\| \sigma, p, \gamma \|_{W^{1,1}H^m},  \| z \|_{L^2 H^m} \}, & & 1 \leq m \leq r, \\
\| e_z \|_{L^\infty L^2} &\lesssim h^m ( \| \sigma, p, \gamma \|_{W^{1,2}H^m} + \| z \|_{W^{1,1}H^m}), & & 1 \leq m \leq r, \\
\| e_p \|_{L^\infty L^2} &\lesssim h^m ( \| \sigma, p, \gamma \|_{W^{1,2}H^m} + \| z \|_{W^{1,1}H^m}), & & 1 \leq m \leq r. 
}
\end{cor}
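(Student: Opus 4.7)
The plan is to decompose each error as $e_\xi = e_\xi^I + e_\xi^h$ for $\xi \in \{\sigma, u, \gamma, z, p\}$ and then apply the triangle inequality in the appropriate space-time norm. Since this is now a corollary rather than a standalone theorem, almost all the work has already been done: the $e^h$-parts are controlled by Theorem~\ref{thm:semi-err}, and the $e^I$-parts are controlled by the interpolation estimates \eqref{eq:semi-sigma-err}--\eqref{eq:semi-gamma-err}. The corollary is therefore essentially a bookkeeping combination of these two sources of error.

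First I would handle the combined quantity $\| e_\sigma + \alpha e_p \I \|_{L^\infty L_\mc{A}^2}$. By the triangle inequality this is bounded by $\| e_\sigma^I + \alpha e_p^I \I \|_{L^\infty L_\mc{A}^2} + \| e_\sigma^h + \alpha e_p^h \I \|_{L^\infty L_\mc{A}^2}$. The first term is estimated, using boundedness of $\mc{A}$ together with \eqref{eq:semi-sigma-err} and \eqref{eq:semi-p-err}, by $h^m \| \sigma, p \|_{L^\infty H^m}$, which is in turn controlled by $h^m \| \sigma, p \|_{W^{1,1}H^m}$ via the Sobolev embedding $W^{1,1}H^m \subset L^\infty H^m$; the second term is exactly \eqref{eq:sigmap-estm}. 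Adding these gives the stated bound.

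For each remaining unknown I would repeat the same two-term decomposition. The $L^\infty L^2$ interpolation bounds \eqref{eq:semi-sigma-err}, \eqref{eq:semi-u-err}, \eqref{eq:semi-z-err}, \eqref{eq:semi-p-err}, \eqref{eq:semi-gamma-err} give one term, with the correct upper range of $m$ (in particular $1 \le m \le r'$ for $u$, and $1 \le m \le r$ otherwise), and the matching estimates \eqref{eq:sigma-estm}--\eqref{eq:p-estm} from Theorem~\ref{thm:semi-err} give the other. Since each right-hand side on the $e^I$-side is dominated by the corresponding $e^h$-side right-hand side (again via Sobolev embedding), the combined bound coincides with the semidiscrete bound, which matches the statement of the corollary.

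There is essentially no obstacle; the only minor point to check is that the $L^\infty H^m$ norms appearing from the interpolation estimates are absorbed into the $W^{1,r}H^m$ quantities on the right-hand sides of \eqref{eq:sigmap-estm}--\eqref{eq:p-estm}, which follows from the continuous embeddings $W^{1,1}H^m \subset L^\infty H^m$ and $W^{1,2}H^m \subset L^\infty H^m$. After this verification, concatenating the triangle-inequality estimates finishes the proof.
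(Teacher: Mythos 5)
Your proposal is correct and matches the paper's approach exactly: the paper obtains the corollary by combining the interpolation estimates \eqref{eq:semi-sigma-err}--\eqref{eq:semi-gamma-err} with Theorem~\ref{thm:semi-err} via the triangle inequality on the decomposition $e_\xi = e_\xi^I + e_\xi^h$. Your observation that the $L^\infty H^m$ interpolation bounds are absorbed into the $W^{1,1}H^m$ and $W^{1,2}H^m$ quantities by the Sobolev embedding is the right closing detail.
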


\subsection{Robustness of error estimates for nearly incompressible materials}

For isotropic elastic porous media, the elasticity tensor $\mathcal{C}$ has the form
\begin{align*} 
\mathcal{C} \tau = 2 \mu \tau + \lambda (\tr \tau) \I, \qquad \tau \in L^2(\Omega; \Bbb{S}), 
\end{align*}
where the constants $\mu, \lambda >0$ are Lam\'{e} coefficients, and 
\begin{align} \label{eq:A-form}
\mc{C}^{-1} = \mathcal{A}^s \tau = \frac{1}{2 \mu} \left( \tau - \frac{\lambda}{2 \mu + n \lambda} (\tr \tau) \I \right) .
\end{align}
Throughout this subsection, we assume that $\mc{A}^s$ has this form and $\mathcal{A}$ is the extension of $\mc{A}^s$ to $L^2(\Omega; \M)$ as before. One can see that the coercivity of $\mc{A}$ on $L^2(\Omega; \M)$ is not uniform in $\lambda$. In other words,
\algns{
c_{\lambda} \| \tau \|_0^2 \leq \| \tau \|_{\mc{A}}^2, \qquad \forall \tau \in L^2(\Omega; \M)
}
holds with a constant $c_{\lambda} >0$ but $c_{\lambda} \ra 0$ as $\lambda \ra + \infty$. It means that error estimates obtained using coercivity of $\mc{A}$ may have error bounds growing unboundedly as $\lambda \ra +\infty$. The purpose of this subsection is to show that the error bounds in the previous subsection are uniform for arbitrarily large $\lambda$. 

In the proof of Theorem~\ref{thm:semi-err} we can see that many error estimates rely on the estimate of $\| e_\sigma^h + \alpha e_p^h \I \|_{\mc{A}}$ and boundedness of the bilinear form $\mc{A}$. It is an important observation that estimates utilizing the $\mc{A}$-weighted norm and boundedness of $\mc{A}$ in \eqref{eq:A-form} are uniform as $\lambda \ra +\infty$.
Thus, the only error estimate requiring coercivity of $\mc{A}$ is the error estimate of $\| e_\sigma^h \|_{L^\infty L^2}$. 

\tred{Before we prove the main result, we need preliminary results. For a tensor $\tau$ we use $\tau^D$ to denote the deviatoric part of $\tau$, i.e.,
\begin{align*}
\tau^D := \tau - \frac 1n (\tr \tau) \I. 
\end{align*}
We define $H_{\Gamma}^1(\Omega)$ as
\algn{ \label{eq:H1Gamma}
H_{\Gamma}^1(\Omega) = 
\{ \phi \in H^1(\Omega; \V) \,:\, \phi |_{\Gamma_d} = 0 \},
}
where $\phi |_{\Gamma_d}$ is the trace of $\phi$ on $\Gamma_d \subset \pd \Omega$. Recall the definitions of $\Sigma$ in \eqref{eq:svgamma-space} and $\Sigma_{\pd}$ in \eqref{eq:bdy-space} depending on boundary conditions $\Gamma_d = \pd \Omega$ and $\Gamma_d \not = \pd \Omega$.
\begin{lemma}  \label{thm:dev-lemma} Suppose that 
\algn{ \label{eq:tau-dev-assume}
\tau \in \Sigma, \quad \int_{\Omega} \tr \tau \, dx = 0, \qquad \text{or } \qquad \tau \in \Sigma_{\pd} .
}
Then 
\algn{ \label{eq:tau-dev-estm}
\| \tau \|_0 \lesssim \| \tau^D \|_0 + \sup_{\phi \in H_{\Gamma}^1(\Omega)} \frac{ (\div \tau, \phi) }{\| \phi \|_1} .
}
\end{lemma}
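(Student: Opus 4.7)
The plan is to reduce the estimate for $\|\tau\|_0$ to one for $\|\tr\tau\|_0$ via the orthogonal decomposition $\tau = \tau^D + \frac{1}{n}(\tr\tau)\I$, which gives $\|\tau\|_0^2 = \|\tau^D\|_0^2 + \frac{1}{n}\|\tr\tau\|_0^2$. The trace $\|\tr\tau\|_0$ will then be controlled by duality against a vector field $\phi$ that realizes $\div\phi = \tr\tau$ and satisfies the appropriate homogeneous boundary condition on $\Gamma_d$.

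The key input is a right-inverse result for the divergence: there exists $\phi \in H_{\Gamma}^1(\Omega)$ such that $\div \phi = \tr \tau$ and $\|\phi\|_1 \lesssim \|\tr\tau\|_0$. In the second case of \eqref{eq:tau-dev-assume}, where $\Gamma_d \subsetneq \pd\Omega$ and $\tau \in \Sigma_{\pd}$, this follows from a Bogovskii-type construction, since the partial trace condition $\phi|_{\Gamma_d} = 0$ leaves enough freedom to solve $\div \phi = q$ for any $q \in L^2(\Omega)$. In the other case $\Gamma_d = \pd\Omega$, so $H_\Gamma^1 = H_0^1(\Omega;\V)$, the surjectivity of $\div$ holds only onto functions of mean zero (the Ne\v{c}as--Lions lemma), which is precisely why the hypothesis $\int_\Omega \tr\tau\,dx = 0$ must be imposed.

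With such a $\phi$ at hand, expand $\tau : \grad \phi = \tau^D : \grad \phi + \frac{1}{n}(\tr\tau)\,\div\phi$ and integrate by parts over $\Omega$ to obtain
\begin{align*}
\tfrac{1}{n}\|\tr\tau\|_0^2 = (\tau, \grad\phi) - (\tau^D, \grad\phi) = -(\div\tau, \phi) - (\tau^D, \grad\phi) + \int_{\pd\Omega} (\tau \n)\cdot\phi\, ds.
\end{align*}
The boundary integral vanishes in either case: for $\tau \in \Sigma_{\pd}$ the integrand vanishes on $\Gamma_d$ because $\phi = 0$ there and on $\Gamma_t$ because $\tau\n = 0$; for $\tau \in \Sigma$ with $\Gamma_d = \pd\Omega$ it vanishes because $\phi|_{\pd\Omega} = 0$. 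Applying the Cauchy--Schwarz inequality and the bound $\|\phi\|_1 \lesssim \|\tr\tau\|_0$ then gives
\begin{align*}
\tfrac{1}{n}\|\tr\tau\|_0^2 \lesssim \LRp{ \sup_{\psi \in H_\Gamma^1(\Omega)} \frac{(\div\tau,\psi)}{\|\psi\|_1} + \|\tau^D\|_0 } \|\tr\tau\|_0,
\end{align*}
from which \eqref{eq:tau-dev-estm} follows after dividing by $\|\tr\tau\|_0$ and combining with the $\|\tau^D\|_0$ contribution from the orthogonal decomposition.

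The main technical obstacle is the right-inverse construction for $\div$ in the mixed-boundary-condition case; this is a classical result (Bogovskii/Ne\v{c}as--Lions), but it does require mild regularity of the partition $\pd\Omega = \Gamma_d \cup \Gamma_t$. Once this surjectivity is in hand, the remainder is a routine integration-by-parts and Cauchy--Schwarz manipulation, with the mean-zero hypothesis in the first case of \eqref{eq:tau-dev-assume} functioning precisely to make the divergence surjective.
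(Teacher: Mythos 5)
Your proof is correct and follows essentially the same route as the paper's: reduce to bounding $\|\tr\tau\|_0$ via the orthogonal split $\tau = \tau^D + \tfrac{1}{n}(\tr\tau)\I$, solve $\div\phi = \tr\tau$ with $\phi \in H^1_\Gamma(\Omega)$ and $\|\phi\|_1 \lesssim \|\tr\tau\|_0$, then integrate by parts and apply Cauchy--Schwarz. You are somewhat more explicit than the paper about why the right-inverse for $\div$ exists in each of the two cases (Ne\v{c}as--Lions with the mean-zero condition when $\Gamma_d = \pd\Omega$, Bogovskii-type construction otherwise) and about the vanishing of the boundary term, but the underlying argument is identical.
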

}
\begin{proof}
\tred{ 
Since $(\tau^D, (\tr \tau) \I) =0$, $\| \tau \|_0 \lesssim \| \tau^D \|_0 + \| \tr \tau \|_0$ holds and it suffices to show that $\| \tr \tau \|_0$ is bounded by the right-hand side of \eqref{eq:tau-dev-estm}. Due to the assumption \eqref{eq:tau-dev-assume}, there exists $\phi \in H_{\Gamma}^1(\Omega)$ such that $\div^* \phi = \tr \tau$ and $\| \phi \|_1 \lesssim \| \tr \tau \|_0$, where $\div^*$ means the vertical divergence (cf. \cite{girault-raviart-NSE}). Then an algebraic identity and the integration by parts give 
\algns{ 
\| \tr \tau \|_0^2 = (\tr \tau , \div^* \phi) &= n(\tau , \grad \phi) - n(\tau^D, \grad \phi) \\
&= - n(\div \tau, \phi) - n(\tau^D, \grad \phi) .
}
We get the desired result from the Cauchy--Schwarz inequality, $\| \phi \|_1 \lesssim \| \tr \tau \|_0$, and dividing both sides by $\| \phi \|_1$. \qed
}
\end{proof}
\tred{
The proof of Lemma~\ref{thm:dev-lemma} is completely analogous to the proof of Lemma~3.1 in \cite{ADG84}, with a simple modification for general boundary conditions. A similar result was proved in \cite{MR2004196} with a Helmholtz decomposition. 
}

\begin{theorem} \label{thm:locking-free} Suppose that $(\sigma, u, \gamma, z, p)$ and $(\sigma_h, u_h, \gamma_h, z_h, p_h)$ are exact and discrete solutions of \eqref{eq:weak-eq1}--\eqref{eq:weak-eq4} and \eqref{eq:disc-eq1}--\eqref{eq:disc-eq4} with the assumptions as in Theorem~\ref{thm:semi-err}. We also assume that $\mc{A}$ is the extension of \eqref{eq:A-form}, and $\alpha$ is piecewise constant on each mesh element. Then 
\begin{align*}
\| \sigma - \sigma_h \|_0 \leq ch^m, \qquad 1 \leq m \leq r,
\end{align*}
holds with a constant $c$, which is uniformly bounded for arbitrarily large $\lambda$. Moreover, the same result holds for a solution of \eqref{eq:weak-eq1bc}--\eqref{eq:weak-eq4bc} (with $\Gamma_d \not = \pd \Omega$) and its discrete counterpart. 
\end{theorem}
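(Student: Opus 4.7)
The strategy is to bound $\|\sigma - \sigma_h\|_{L^\infty L^2}$ by splitting $e_\sigma = e_\sigma^I + e_\sigma^h$ and controlling each piece with a constant that is independent of $\lambda$. The interpolation component is already handled by \eqref{eq:semi-sigma-err}, whose constant depends only on the approximation properties of $I_h^\Sigma$; hence the entire argument reduces to finding a $\lambda$-robust $h^m$ bound on $\|e_\sigma^h\|_{L^\infty L^2}$. For this I would combine Lemma~\ref{thm:dev-lemma} with the $\mathcal{A}$-weighted estimate \eqref{eq:sigmap-estm}, which was proved in Theorem~\ref{thm:semi-err} using only boundedness of $\mathcal{A}$ and is therefore already uniform in $\lambda$.

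Concretely, after verifying the hypotheses of Lemma~\ref{thm:dev-lemma} for $e_\sigma^h$ (discussed below), I would obtain
\begin{equation*}
\|e_\sigma^h\|_0 \lesssim \|(e_\sigma^h)^D\|_0 + \sup_{\phi \in H^1_\Gamma(\Omega)} \frac{(\div e_\sigma^h, \phi)}{\|\phi\|_1}.
\end{equation*}
The supremum vanishes because $\div e_\sigma^h \equiv 0$: equation \eqref{eq:disc-eq2} together with $\div \Sigma_h = V_h$ from {\bf (S1)} gives $\div \sigma_h = -P_h^V f$, while \eqref{eq:elliptic1} yields $\div I_h^\Sigma \sigma = P_h^V \div \sigma = -P_h^V f$. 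For the deviatoric term, I exploit that $\alpha$ is piecewise constant on the mesh and $\I^D = 0$, so $(\alpha e_p^h \I)^D = 0$ and $(e_\sigma^h)^D = (e_\sigma^h + \alpha e_p^h \I)^D$. The key algebraic observation is that the form $\mathcal{A}$ in \eqref{eq:A-form} reduces to $\tfrac{1}{2\mu}$ times the identity on trace-free tensors and that the splitting $\tau = \tau^D + \tfrac{1}{n}(\tr \tau)\I$ is pointwise $\mathcal{A}$-orthogonal, so $\|\tau^D\|_0^2 = 2\mu \|\tau^D\|_{\mathcal{A}}^2 \leq 2\mu \|\tau\|_{\mathcal{A}}^2$ with a constant independent of $\lambda$. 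Applying this to $\tau = e_\sigma^h + \alpha e_p^h \I$ and invoking \eqref{eq:sigmap-estm} delivers a $\lambda$-robust $h^m$ bound on $\|(e_\sigma^h)^D\|_0$.

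The principal obstacle is the trace hypothesis of Lemma~\ref{thm:dev-lemma} when $\Gamma_d = \pd \Omega$: the $\Sigma_\pd$ alternative is vacuous in that case and $\int_\Omega \tr e_\sigma^h \, dx = 0$ is not automatic. To resolve this I would set $c := \tfrac{1}{n|\Omega|}\int_\Omega \tr e_\sigma^h\, dx$ and apply the lemma to $\tilde e_\sigma^h := e_\sigma^h - c\I$, which preserves both the deviatoric part and the (vanishing) divergence and satisfies $\int_\Omega \tr \tilde e_\sigma^h\, dx = 0$. The triangle inequality $\|e_\sigma^h\|_0 \leq \|\tilde e_\sigma^h\|_0 + |c|\sqrt{n|\Omega|}$ then reduces the task to bounding $|c|$, which I would extract by testing the error equation \eqref{eq:err-eq1n} against a suitable $\tau \in \Sigma_h$ with prescribed mean trace and invoking the already-established $\mathcal{A}$-norm estimate of $e_\sigma^h + \alpha e_p^h \I$ to conclude $|c| \lesssim h^m$ with a $\lambda$-independent constant. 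The "moreover" statement for $\Gamma_d \neq \pd \Omega$ then follows from the same argument with no trace correction needed, since $e_\sigma^h \in \Sigma_\pd$ directly satisfies the second alternative of the lemma.
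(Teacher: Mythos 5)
Your outline is essentially the paper's proof: split off $e_\sigma^I$, note $\div e_\sigma^h=0$, correct the mean trace by $\tilde e_\sigma^h=e_\sigma^h-c\,\I$, apply Lemma~\ref{thm:dev-lemma}, and control the deviatoric part through the $\lambda$-uniform $\mathcal{A}$-weighted estimate. Your shortcut $(e_\sigma^h)^D=(e_\sigma^h+\alpha e_p^h\I)^D$ followed by $\|\tau^D\|_0\lesssim\|\tau\|_{\mathcal{A}}$ applied to $\tau=e_\sigma^h+\alpha e_p^h\I$ is a slight streamlining of the paper's route (which first passes to $\|e_\sigma^h\|_{\mathcal{A}}$), and your treatment of $\Gamma_d\neq\pd\Omega$ via $e_\sigma^h$ alone is a legitimate variant of the paper's argument, which instead applies the lemma to the full error $e_\sigma$ and absorbs the nonzero $\div e_\sigma$ into $-(e_\sigma^I,\grad\phi)$ by integration by parts.

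The one step you leave underspecified is the bound on $|c|$, and the tool you name there would not work as stated. If you test \eqref{eq:err-eq1n} with a $\tau$ of prescribed mean trace and then estimate $(\mathcal{A}(e_\sigma^h+\alpha e_p^h\I),\tau)$ by Cauchy--Schwarz in the $\mathcal{A}$-inner product, you must convert $(\mathcal{A}e_\sigma^h,\I)=\tfrac{1}{2\mu+n\lambda}\int_\Omega\tr e_\sigma^h\,dx$ back into $\int_\Omega\tr e_\sigma^h\,dx$, and since $\|\I\|_{\mathcal{A}}=\bigl(n|\Omega|/(2\mu+n\lambda)\bigr)^{1/2}$ this costs a factor of order $(2\mu+n\lambda)^{1/2}$, destroying the uniformity in $\lambda$. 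The paper instead exploits that for $\tau=\I$ the right-hand side of \eqref{eq:err-eq1n} vanishes exactly (using \eqref{eq:tr-mean}, the piecewise constancy of $\alpha$, and the elementwise mean-zero property of $e_p^I$), which yields the identity $\int_\Omega\tr e_\sigma^h\,dx=-n\int_\Omega\alpha e_p^h\,dx$ and hence $|c|\lesssim\|e_p^h\|_0$; the $\lambda$-robust conclusion then comes from the $\|e_p^h\|_{L^\infty L^2}$ estimate of Theorem~\ref{thm:semi-err}, not from the $\mathcal{A}$-norm estimate of $e_\sigma^h+\alpha e_p^h\I$. With that substitution your argument closes.
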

\begin{proof} 
Since $I_h^{\Sigma}$ is independent of $\lambda$, $\| e_\sigma^I \|_0 \lesssim h^m$ holds with a constant independent of $\lambda$. By the triangle inequality, it is enough to have an estimate $\| e_\sigma^h \|_0$ with an upper bound uniformly bounded for arbitrarily large $\lambda$. 

We first consider the case with $\Gamma_d = \pd \Omega$. Since $\Sigma_h$ does not have any essential boundary condition on a part of $\pd \Omega$, we can take $\tau = \I \in \Sigma_h$ as a test function in error analysis. By taking $\tau = \I$ in \eqref{eq:ellip1}, we also have 
\begin{align} \label{eq:tr-mean}
\int_{\Omega} \tr (\sigma - I_h^{\Sigma} \sigma) \,dx = \int_{\Omega} \tr e_\sigma^I \,dx =  0 .
\end{align}
Taking $\tau = \I$ in \eqref{eq:err-eq1}, we have 
\algn{ \label{eq:tauI-eq}
(\mc{A} (e_\sigma + \alpha e_p \I), \I) = \frac{1}{2 \mu + n \lambda} \int_{\Omega} (\tr e_{\sigma} + \alpha n e_p )dx = 0. 
}
Let $| \Omega |$ be the $n$-dimensional Lebesgue measure of $\Omega$ and define 
\begin{align*}
\beta := \frac{1}{n | \Omega | } \int_{\Omega} \tr e_\sigma \,dx .
\end{align*}
Taking $\tau = \I$ in \eqref{eq:err-eq1}, we have 
\begin{align} \label{eq:beta}
n | \Omega | \beta = \int_{\Omega} \tr e_\sigma \,dx =  \int_{\Omega} \tr e_\sigma^h \,dx = - n \int_{\Omega} \alpha e_p \,dx = - n \int_{\Omega} \alpha e_p^h \,dx ,
\end{align}
where the second, third, and fourth equalities are due to \eqref{eq:tr-mean}, \eqref{eq:tauI-eq}, and the facts that $\alpha$ is piecewise constant and $e_p^I$ is mean-value zero on each mesh element. Letting $\tilde{e}_\sigma^h = e_\sigma^h - \beta \I$ we have 
\begin{align} \label{eq:tr0}
\int_{\Omega} \tr \tilde{e}_\sigma^h \,dx = 0.
\end{align}
The triangle inequality gives 
\begin{align*}
\| e_\sigma^h \|_0 \leq \| \tilde{e}_\sigma^h \|_0 + |\beta | \| \I \|_0 = \| \tilde{e}_\sigma^h \|_0 + \sqrt{n} | \beta | | \Omega |^{\half}.
\end{align*}
By \eqref{eq:beta} and H\"older inequality, we have  
\begin{align*}
\sqrt{n} | \beta | | \Omega |^{\half} = \tblu{\sqrt{n} | \Omega |^{-\half} \left | \int_{\Omega} \alpha e_p^h \,dx \right| \lesssim \| e_p^h \|_0}.
\end{align*}
Recall that the estimate of $\| e_p^h \|_0$ is uniform in $\lambda$, so the above two inequalities imply that, to have a $\lambda$-uniform estimate of $\| e_\sigma^h \|_0$, we only need a $\lambda$-uniform estimate of $\| \tilde{e}_\sigma^h \|_0$. 
If we use \eqref{eq:tr0} and the fact $\div e_\sigma^h = \div \tilde{e}_\sigma^h = 0$ by definition of $\tilde{e}_\sigma^h$, then \eqref{eq:tau-dev-estm} gives \tred{$\| \tilde{e}_\sigma^h \|_0 \lesssim \| (\tilde{e}_\sigma^h)^D \|_0$}. 
In addition, it is easy to see $\| (\tilde{e}_\sigma^h)^D \|_0 \lesssim \| \tilde{e}_\sigma^h \|_{\mc{A}}$ from the definition of $\mc{A}$ (cf. \cite[Lemma~3.2]{ADG84}), so it is sufficient to estimate $\| \tilde{e}_\sigma^h \|_{\mc{A}}$. A direct computation using the form of $\mc{A}$ in \eqref{eq:A-form} gives
\begin{align*}
\| \tilde{e}_\sigma^h \|_{\mc{A}}^2 &= \left ( \mc{A} ( e_\sigma^h - \beta \I), e_\sigma^h - \beta \I \right )  \\
&= (\mc{A} e_\sigma^h, e_\sigma^h) - 2 \beta (\mc{A} \I,  e_\sigma^h) + \beta^2 (\mc{A} \I, \I) \\
&= \| e_\sigma^h \|_{\mc{A}}^2 - \frac{2 \beta }{2 \mu + n \lambda} \int_{\Omega} \tr e_\sigma^h \,dx + \frac{n \beta^2 }{2 \mu + n \lambda} | \Omega |  \quad (\text{by }\eqref{eq:A-form}) \\
&= \| e_\sigma^h \|_{\mc{A}}^2 - \frac{n \beta^2 }{2 \mu + n \lambda} | \Omega | \qquad  (\text{by }\eqref{eq:beta}) \\
&\leq \| e_\sigma^h \|_{\mc{A}}^2 .
\end{align*}
As we already have an estimate of $\| e_\sigma^h \|_{\mc{A}}$ which is uniform in $\lambda$, we obtain an estimate of $\| \tilde{e}_{\sigma}^h \|_0$ uniform in $\lambda$ as well. 

\tred{
If $\Gamma_d \not = \pd \Omega$, then we can apply \eqref{eq:tau-dev-estm} directly to $e_{\sigma}$ and have 
\algn{ \label{eq:esigma-estm}
\| e_{\sigma} \|_0 \lesssim \| (e_{\sigma})^D \|_0 + \sup_{\phi \in H_{\Gamma}^1 (\Omega)} \frac{ (\div e_{\sigma}, \phi)}{\| \phi \|_1} .
}
Then it is enough to show that the first and second terms in the above are bounded by $\| e_{\sigma}^I \|_0$ and $\| e_{\sigma}^h \|_{\mc{A}}$. The first term $\| (e_{\sigma})^D \|_0$ is easily estimated by 
\algns{ 
\| (e_{\sigma})^D \|_0 \leq \| (e_{\sigma}^I)^D \|_0 + \| (e_{\sigma}^h)^D \|_0 \lesssim \| e_{\sigma}^I \|_0 + \| e_{\sigma}^h \|_{\mc{A}} .
}
}
\tred{ 
To estimate the second term, note that \eqref{eq:err-eq2} with $\eta = 0$ implies that $\div e_{\sigma} = \div \sigma - P_h^V \div \sigma$, so 
\algns{ 
(\div e_{\sigma}, \phi) = (\div (\sigma - I_h^{\Sigma} \sigma) , \phi) = - (\sigma - I_h^{\Sigma} \sigma, \grad \phi) = - (e_{\sigma}^I, \grad \phi). 
}
Thus, the second term in \eqref{eq:esigma-estm} is bounded by $\| e_{\sigma}^I \|_0$. \qed
}
\end{proof}

\subsection{Superconvergence and a local post-processing}
We show that superconvergence of the displacement error is available for certain choices of $(\Sigma_h, V_h, \Gamma_h, W_h, Q_h)$. We also show that a new numerical displacement with higher order accuracy can be obtained from a local postprocessing of the superconvergent numerical displacement.

Suppose that $(\Sigma_h, V_h, \Gamma_h, W_h, Q_h)$ are elements such that 
\algns{
\mc{O}(\Sigma_h) = \mc{O}(\Gamma_h) = \mc{O}(W_h) = \mc{O}(Q_h) = r, \qquad \mc{O}(V_h) = r-1
}
with $r \geq 2$. Then we have obtained 
\begin{align} \label{eq:eh-super}
\| e_{\sigma}^h, e_u^h, e_z^h, e_p^h, e_{\gamma}^h \|_0 \lesssim h^r
\end{align}
in Theorem~\ref{thm:semi-err} assuming that exact solutions are sufficiently regular. 
In particular, \eqref{eq:eh-super} implies that the convergence rate of $\| e_u^h \|_0$ is higher than the best approximation order of $V_h$. This allows us to use a local post-processing to obtain $u_h^*$, a numerical solution of $u$ with better accuracy. 

We describe the local post-processing here. Let $V_h^*$ be the space of polynomials with $1$ degree higher than $V_h$, $V_{h,0} \subset V_h^*$ be the space of piecewise constants, and $V_{h,0}^{\perp}$ be the $L^2$ orthogonal complement of $V_{h,0}$ in $V_h^*$. The post-processed solution $u_h^*$ is defined as the solution of the system
\algn{
\label{eq:post1} (\grad_h u_h^* (t), \grad_h v) &= (\mc{A} (\sigma_h (t)+ \alpha p_h (t) \I) +  \gamma_h (t), \grad_h v) , \\
\label{eq:post2} (u_h^*, v') &= (u_h (t), v') ,
}
for all $v \in V_{h,0}^{\perp}$ and $v' \in V_{h,0}$, where $\grad_h v$ is the element-wise gradient of $v$, which is an $n\times n$ matrix-valued function. It is easy to see that $u_h^*$ is well-defined by checking that \eqref{eq:post1}--\eqref{eq:post2} is a nonsingular linear system. The error analysis of this post-processing is almost the same as the one in \cite{Arnold-Lee} but we present a detailed proof to be self-contained. 

\begin{theorem} \label{post-thm}
Suppose that the assumptions of Theorem \ref{thm:semi-err} hold and $\| u \|_{W^{1,1}H^{r}}$ is finite. Assume also that $(\sigma_{h}, u_h, \gamma_h, z_h, p_h)$ is a semidiscrete solution of \eqref{eq:disc-eq1}--\eqref{eq:disc-eq4} and $r' = r-1$. 
If we define $u_h^{*} (t) \in V_h^*$ by \eqref{eq:post1}--\eqref{eq:post2},
then $u_h^{*}(t)$ satisfies 
\begin{align}
\label{post-error} \| u (t) - u_h^{*}(t) \|_0 \lesssim h^{r} . 
\end{align}
\end{theorem}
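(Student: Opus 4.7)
The plan is to follow the Stenberg-style post-processing analysis used in \cite{Arnold-Lee}, which exploits the pointwise continuous identity $\grad u = \mc{A}(\sigma + \alpha p \I) + \gamma$ obtained by rearranging \eqref{eq:four-field-eq2}. Equation \eqref{eq:post1} is precisely the discrete counterpart of this identity tested against $\grad_h v$ for $v \in V_{h,0}^\perp$, and \eqref{eq:post2} pins down the piecewise-constant part of $u_h^*$; the latter is controlled by the superconvergent bound $\| e_u^h \|_0 \lesssim h^r$ supplied by Theorem~\ref{thm:semi-err}, which is what makes the higher-order post-processed rate possible despite $\mc{O}(V_h) = r-1$.

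First I introduce a local projection $\pi_h^* : L^2(\Omega;\V) \to V_h^*$ (for instance the elementwise $L^2$ projection onto $V_h^*$) satisfying $(u - \pi_h^* u, v') = 0$ for all $v' \in V_{h,0}$ and the optimal approximation estimates $\| u - \pi_h^* u \|_0 \lesssim h^r \| u \|_r$ and $\| \grad_h (u - \pi_h^* u) \|_0 \lesssim h^{r-1} \| u \|_r$. Writing $u - u_h^* = (u - \pi_h^* u) + \xi$ with $\xi := \pi_h^* u - u_h^* \in V_h^*$, the approximation term already meets \eqref{post-error}. I then decompose $\xi = \xi_0 + \xi_\perp$ with $\xi_0 \in V_{h,0}$ and $\xi_\perp \in V_{h,0}^\perp$ and bound the two components separately.

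For $\xi_0$, testing \eqref{eq:post2} against $v' \in V_{h,0}$ and using $V_{h,0} \subset V_h$ (so that $(u - P_h^V u, v') = 0$) yields $(\xi_0, v') = (\pi_h^* u - u_h, v') = (u - u_h, v') = (e_u^h, v')$; choosing $v' = \xi_0$ gives $\| \xi_0 \|_0 \leq \| e_u^h \|_0 \lesssim h^r$. For $\xi_\perp$, I subtract the continuous identity paired with $\grad_h v$ from \eqref{eq:post1} to obtain, for every $v \in V_{h,0}^\perp$,
\[
(\grad_h(u - u_h^*), \grad_h v) = (\mc{A}(e_\sigma + \alpha e_p \I) + e_\gamma, \grad_h v) .
\]
Since $\xi_0$ is piecewise constant, $\grad_h \xi_0 = 0$, so the left side equals $(\grad_h \xi_\perp - \grad_h(u - \pi_h^* u), \grad_h v)$. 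Taking $v = \xi_\perp$, applying Cauchy--Schwarz, and inserting the bounds $\| e_\sigma \|_0, \| e_p \|_0, \| e_\gamma \|_0 \lesssim h^r$ from Corollary~\ref{thm:full-err} together with the approximation estimate for $\pi_h^*$ gives $\| \grad_h \xi_\perp \|_0 \lesssim h^{r-1}$. A piecewise Poincar\'e inequality, applicable because $\xi_\perp$ has zero mean on every element, then yields $\| \xi_\perp \|_0 \lesssim h \| \grad_h \xi_\perp \|_0 \lesssim h^r$, and the triangle inequality closes the argument.

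The main obstacle is the clean separation of roles between \eqref{eq:post1} and \eqref{eq:post2}: the $\grad_h$ pairing in \eqref{eq:post1} annihilates elementwise constants, so the superconvergence of $u_h$ must enter solely through \eqref{eq:post2} to control $\xi_0$, while the higher modes $\xi_\perp$ must absorb the entire $h^{r-1} \to h^r$ gain via the Poincar\'e step. A minor subtlety worth noting is that in the continuous identity only $\mc{A}^s$ effectively acts (as $\sigma$ is symmetric) while on the discrete side the extension $\mc{A}$ pairs with the weakly-symmetric $\sigma_h$, but both produce matching matrix-valued right-hand sides when tested against $\grad_h v$, so the subtraction proceeds without issue.
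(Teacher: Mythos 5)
Your proposal is correct and follows essentially the same route as the paper: the splitting $u-u_h^* = (u-P_h^*u) + P_h^0(u-u_h^*) + P_h^\perp(u-u_h^*)$, control of the piecewise-constant part through \eqref{eq:post2} and the superconvergent bound $\|e_u^h\|_0\lesssim h^r$, and control of the orthogonal part through the error identity from \eqref{eq:post1} combined with the elementwise Poincar\'e inequality are exactly the steps in the paper's proof. Your $\pi_h^*$, $\xi_0$, $\xi_\perp$ coincide with the paper's $P_h^*$, $P_h^0(u-u_h^*)$, $P_h^\perp(u-u_h^*)$, so this is the same argument in slightly different notation.
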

\begin{proof} For simplicity we will omit the time variable $t$ in the proof. 
Let $P_{h}^0$, $P_h^*$, $P_h^{\perp}$ be the $L^2$ projections into $V_{h,0}$, $V_h^*$, $V_{h,0}^{\perp}$, respectively, and $P_h^* = P_h^0 + P_h^{\perp}$ by definition. To prove \eqref{post-error}, it suffices to show 
\algn{ \label{eq:Phu-estm}
\| P_h^* u - u_h^* \|_0 \lesssim h^r
}
because $\| u - P_h^* u \|_0 \lesssim h^r$ holds by the Bramble--Hilbert lemma. To estimate $\| P_h^* u - u_h^* \|_0$, consider the orthogonal decomposition
\begin{align*}
P_h^* u - u_h^* = P_h^0 (u - u_h^* ) + P_h^{\perp} (u - u_h^* ) \in V_{h,0} \oplus V_{h,0}^{\perp} .
\end{align*}
From the facts $P_h^0 P_h^V u = P_h^0 u$ and $P_h^0 u_h^* = P_h^0 u_h$ (by the definition of $u_h^*$), we have $P_h^0 (u - u_h^* ) = P_h^0 (P_h^V u - u_h)$. Recall that 
$\| P_h^V u - u_h \|_0 \lesssim h^{r} $
is already obtained in Theorem~\ref{thm:semi-err}, so we only need to show $\| P_h^{\perp} (u - u_h^* ) \|_0 \lesssim h^{r}$ in order to prove \eqref{eq:Phu-estm}. Since 
\algns{
\grad_h u = \grad u = \mc{A} (\sigma + \alpha p \I) + \gamma , 
}
we have 
\algns{(\grad_h u , \grad_h w) = (\mc{A} (\sigma  + \alpha p \I) + \gamma , \grad_h w)
}
for $w \in V_{h,0}^{\perp}$. By subtracting the equation \eqref{eq:post1} from this equation, we get, for $w \in V_{h,0}^{\perp}$,
\begin{align} \label{ed-post-sub2}
(\grad_h (u - u_h^* ), \grad_h w) = (\mc{A} (e_{\sigma} + \alpha e_p \I) + e_{\gamma} , \grad_h w). 
\end{align}
Using the equality
\begin{align*}
u - u_h^* &=  (u - P_h^* u ) + (P_h^* u - u_h^* )  = (u - P_h^* u ) + P_h^{\perp}( u - u_h^*) + P_h^0 (u - u_h^*)
\end{align*}
to replace $u - u_h^*$ in \eqref{ed-post-sub2}, a direct computation gives
\mltlns{
(\grad_h (P_h^{\perp} (u - u_h^*) ), \grad_h w) \\
= - (\grad_h (u - P_h^* u ) - (\mc{A} (e_{\sigma} + \alpha e_p \I) + e_{\gamma} , \grad_h w) 
}
because $\grad_h P_h^0 ( u - u_h^*) = 0$. Taking $w = P_h^{\perp} ( u - u_h^*)$ in this equation and multiplying $h$, we have 
\begin{multline} \label{ed-post-sub3}
h \| \grad_h P_h^{\perp} (u - u_h^* ) \|_0 
\lesssim h(\| \grad_h (u - P_h^* u ) \|_0 + \| \mc{A} (e_{\sigma} + \alpha e_p \I) + e_\gamma \|_0 ).
\end{multline}
Using the estimate $\| P_h^{\perp} ( u - u_h ) \|_0 \lesssim h \| \grad_h P_h^{\perp} (u - u_h ) \|_0$, we get  
\algns{
\| P_h^{\perp} (u - u_h^*)  \|_0 \lesssim h(\| \grad_h (u - P_h^* u ) \|_0 + \| \mc{A} (e_{\sigma} + \alpha e_p \I) + e_\gamma \|_0 ).
}
Now we only need to prove that the two terms on the right-hand side of the above inequality are bounded by $ch^r$. For the first term, one can see 
\algns{
h\|\grad_h (u - P_h^* u) \|_0 \lesssim h^r \| u \|_{r} 
}
by the Bramble--Hilbert lemma. 
For the second term we use the a priori error estimates of $\sigma$, $p$, $\gamma$ in Corollary~\ref{thm:full-err}, and the triangle inequality. \qed 
\end{proof}

\subsection{Well-posedness of fully discrete solutions} 
In this section we discuss the a priori error analysis of fully discrete solutions. 
Let $N$ be a positive integer and $\lap t = T/N$ be the time-step size. Define $t_i = i \lap t$ for $i=0, 1, ..., N$ and 
\algn{ \label{time-quotient}
f^{i} = f ( t_i ), \qquad  \pd_t f^{i} = \frac{f^{i} - f^{i-1}}{\lap t} . 
}
Denoting the $i$-th time step solution by $(\sigma_h^i, u_h^i, \gamma_h^i, z_h^i, p_h^i)$, the full discretization of \eqref{eq:disc-eq1}--\eqref{eq:disc-eq4} with the backward Euler scheme is 
\begin{align}
\label{eq:fd-eq1} (\mathcal{A} (\sigma_h^{i} + \alpha p_h^{i} \I), \tau) + (u_h^{i} , \div \tau) + (\gamma_h^i, \tau) &= 0,  \\
\label{eq:fd-eq2} (\div \sigma_h^{i}, v) + (\sigma_h^{i}, \eta ) &= -(f^i, v),  \\
\label{eq:fd-eq3} (\kappa^{-1} z_h^{i}, w) + ( p_h^{i}, \div w)  &= 0, \\
\label{eq:fd-eq4} \left( s_0 \pd_t p_h^{i} , q \right) + \left( \mc{A} \left( \pd_t \sigma_h^{i} + \alpha \pd_t p_h^{i} \I \right), \alpha q \I \right) - (\div z_h^{i}, q) &= (g^i, q), 
\end{align}
for $i \geq 1$ and any $(\tau, v, \eta, w, q) \in \Sigma_h \times V_h \times \Gamma_h \times W_h \times Q_h$. This is a system of linear equations with the same number of equations and unknowns, so it is a nonsingular linear system if it has a unique solution. 
Suppose that $f^i$, $g^i$, and the $(i-1)$-th time step solution vanish in the above system. Then we show that the $i$-th step solution must vanish. To show it, we multiply $\lap t$ to \eqref{eq:fd-eq4} and get
\algn{
\label{eq:fd-eq1a} (\mathcal{A} (\sigma_h^{i} + \alpha p_h^{i} \I), \tau) + (u_h^{i} , \div \tau) + (\gamma_h^i, \tau) &= 0, & & \forall \tau \in \Sigma_h, \\
\label{eq:fd-eq2a} (\div \sigma_h^{i}, v) + (\sigma_h^{i}, \eta ) &= 0 , & & \forall (v,\eta) \in V_h \times \Gamma_h , \\
\label{eq:fd-eq3a} (\kappa^{-1} z_h^{i}, w) + ( p_h^{i}, \div w)  &= 0, & & \forall w \in W_h, \\
\label{eq:fd-eq4a} \left( s_0 p_h^{i} , q \right) + \left( \mc{A} \left( \sigma_h^{i} + \alpha p_h^{i} \I \right), \alpha q \I \right) - \lap t (\div z_h^{i}, q) &= 0, & & \forall q \in Q_h .
}
Taking $\tau = \sigma_h^i$, $v = u_h^i$, $w = \lap t z_h^i$, $q = p_h^i$, $\eta = - \gamma_h^i$ and adding all the equations, we have 
\algns{
\| \sigma_h^{i} + \alpha p_h^{i} \I \|_{\mc{A}}^2 + (s_0 p_h^i, p_h^i) + \lap t (\kappa^{-1} z_h^i, z_h^i) = 0,
}
so $\sigma_h^{i} + \alpha p_h^{i} \I$ and $z_h^i$ vanish. Note that $p_h^i$ does not necessarily vanish because we do not assume that $s_0$ is strictly positive on the whole $\Omega$. However, the inf-sup condition {\bf (S3)}, the fact $z_h^i = 0$, and \eqref{eq:fd-eq3a} can conclude that $p_h^i = 0$, and therefore $\sigma_h^i = 0$ as well because $\sigma_h^i + \alpha p_h^i = 0$. Now $u_h^i = 0$ and $\gamma_h^i = 0$ can be obtained using the inf-sup condition {\bf (S4)} and \eqref{eq:fd-eq1a}. 

The a priori error analysis of differential algebraic equation with implicit time discretization is well-studied \cite{MR1027594}, so here we state the result and will not show the detailed proof of the error estimates of the fully discrete solutions.

\begin{theorem} \label{thm:fd-err}
Suppose that $(\sigma, u, \gamma, z, p)$ and $(\sigma_h^i, u_h^i, \gamma_h^i, z_h^i, p_h^i)$ are solutions of \eqref{eq:weak-eq1}--\eqref{eq:weak-eq4} and \eqref{eq:fd-eq1}--\eqref{eq:fd-eq4}, respectively. Then the following estimate holds:
\begin{align*}
\sup_{0 \leq i \leq N} \| \sigma^i - \sigma_h^i, u^i - u_h^i, \gamma^i - \gamma_h^i, z^i - z_h^i , p^i - p_h^i \|_0 \leq c(\lap t + h^m), 
\quad 1 \leq m \leq r,
\end{align*}
with a constant $c$ depending on regularity of the exact solution.
\end{theorem}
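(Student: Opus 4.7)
The plan is to mirror the semidiscrete argument of Theorem \ref{thm:semi-err} step by step, with the continuous time derivative $\dot f$ replaced by the backward difference quotient $\pd_t f^i$ and with an extra temporal consistency term appearing in each error equation. As in the semidiscrete case, I would first split the error $e^i$ into an interpolation part $e^{I,i}$ and a discrete part $e^{h,i}$, so that assumptions \textbf{(S1)}--\textbf{(S2)} produce the same orthogonality cancellations as in \eqref{eq:err-eq1n}--\eqref{eq:err-eq4n}. Subtracting the fully discrete system \eqref{eq:fd-eq1}--\eqref{eq:fd-eq4} from \eqref{eq:weak-eq1}--\eqref{eq:weak-eq4} evaluated at $t_i$ and using the splitting, the discrete error equations differ from the semidiscrete ones only by the temporal consistency residuals
\[
r_\sigma^i := \pd_t \sigma(t_i) - \dot\sigma(t_i), \qquad r_p^i := \pd_t p(t_i) - \dot p(t_i),
\]
which by Taylor's theorem satisfy $\| r_\sigma^i, r_p^i \|_0 \lesssim \lap t\, \| \ddot\sigma, \ddot p \|_{L^\infty([t_{i-1},t_i]; L^2)}$.

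Next, to obtain the discrete analog of the key energy estimate for $\| e_\sigma^h + \alpha e_p^h \I \|_{\mc{A}}^2 + \| e_p^h \|_{s_0}^2$, I would test the discrete error equations with $\tau = e_\sigma^{h,i}$, $v = \pd_t e_u^{h,i}$, $w = e_z^{h,i}$, $q = e_p^{h,i}$, $\eta = -\pd_t e_\gamma^{h,i}$, exactly as in the proof of Theorem \ref{thm:semi-err}. The cross terms involving $\pd_t e_u^{h,i}$ and $\pd_t e_\gamma^{h,i}$ cancel against the time-differenced first equation, and the algebraic identity
\[
(\pd_t a^i, a^i) \geq \tfrac{1}{2\lap t}\bigl(\| a^i \|^2 - \| a^{i-1} \|^2\bigr)
\]
applied with $a^i := e_\sigma^{h,i} + \alpha e_p^{h,i}\I$ produces a telescoping sum after multiplication by $\lap t$ and summation from $i=1$ to $i=n$. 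The right-hand side reduces to a discrete $\ell^1$-in-time sum of interpolation-error quantities $\| \pd_t e_\sigma^I, \pd_t e_p^I, \pd_t e_\gamma^I \|_0$, which are bounded by time averages of $\| \dot\sigma, \dot p, \dot\gamma \|_m$ and hence controlled by $h^m$, together with the consistency contributions of order $\lap t$. A discrete Gronwall-type inequality (the $\ell^1$/$\ell^2$ analog of Lemma \ref{lemma:imp-gronwall}) then yields $\| e_\sigma^{h,n} + \alpha e_p^{h,n}\I \|_{\mc{A}} \lesssim h^m + \lap t$ uniformly in $n$. The remaining bounds follow at each fixed $i$ exactly as in Theorem \ref{thm:semi-err}: $\| e_u^{h,i}, e_\gamma^{h,i} \|_0$ from \textbf{(S4)}, $\| e_z^{h,i} \|_0$ from a time-differentiated energy argument involving the second backward difference $\pd_t^2$, $\| e_p^{h,i} \|_0$ from \textbf{(S3)}, and $\| e_\sigma^{h,i} \|_0$ from the triangle inequality. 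The initial step is handled by the assumption \eqref{eq:ic} on the numerical initial data.

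The principal obstacle is the estimate of $\| e_z^{h,i} \|_0$, which in the semidiscrete case required differentiating the first three equations in time and testing against $\dot e_\sigma^h$, $-\dot e_u^h$, $e_z^h$, $-\dot e_\gamma^h$. In the fully discrete setting this step produces second backward differences $\pd_t^2 e^{h,i}$ whose consistency residuals are of order $\lap t$ controlled by third-order time derivatives of $\sigma$ and $p$, and one must verify that the algebraic constraints \eqref{eq:fd-eq2}--\eqref{eq:fd-eq3} still deliver the necessary orthogonality cancellations after a $\pd_t$ is applied at the very first time step $i=1$. This is the standard difficulty in the convergence theory of implicit schemes for differential-algebraic equations; it is resolved by a careful treatment of the first time step together with the consistent initialization provided by \eqref{eq:ic}, and the detailed argument follows the pattern described in \cite{MR1027594}, which is why the theorem above states the final bound $c(\lap t + h^m)$ without presenting the complete proof.
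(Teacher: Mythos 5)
The paper does not actually prove Theorem \ref{thm:fd-err}: it states the result and explicitly defers to the standard backward-Euler error analysis for differential-algebraic systems in \cite{MR1027594}. Your sketch is precisely that standard argument --- splitting $e^i$ into interpolation and discrete parts so that \textbf{(S1)}--\textbf{(S2)} give the same cancellations, replacing $\frac{d}{dt}$ by $\pd_t$, using $(\pd_t a^i,a^i)\ge \frac{1}{2\lap t}\bigl(\|a^i\|^2-\|a^{i-1}\|^2\bigr)$ to telescope, invoking a discrete Gronwall inequality, and absorbing the $O(\lap t)$ consistency residuals --- so it is the argument the paper intends, and the outline is sound.

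Two remarks. First, a technical imprecision: in the flux estimate no second backward differences $\pd_t^2 e^{h,i}$ of the discrete error actually arise. As in the semidiscrete case one applies $\pd_t$ only to the first three error equations and tests the fourth (which already contains $\pd_t$) with $q=\pd_t e_p^{h,i}$, so only first differences appear, and the consistency terms are controlled by $\ddot\sigma$, $\ddot p$ rather than third-order time derivatives; your version would still close but demands more regularity than necessary. Second, the genuine point that remains open --- in your sketch and in the paper alike --- is the first time step: differencing the error equations at $i=1$ requires them to hold at $i=0$, i.e., the numerical initial data must satisfy the discrete algebraic constraints. Assumption \eqref{eq:ic} only guarantees $O(h^m)$ closeness of $\sigma_h(0)$, $p_h(0)$, $z_h(0)$ to the exact data and does not by itself deliver this compatibility, so either the initial data must be constructed as a discrete solution of the algebraic subsystem or the first step must be estimated separately. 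You correctly identify this as the standard DAE compatibility issue; making it rigorous is exactly the content the paper outsources to \cite{MR1027594}.
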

Let us give a remark on other time discretizations. We may use other implicit time discretization schemes with higher order time discretization errors but they may need numerical initial data compatible with algebraic equations of the problem. For example, if the Crank--Nicolson method is used and numerical initial data do not satisfy the algebraic equations of the problem, then the numerical solution does not satisfy the algebraic equations at all time steps due to the time stepping algorithm of the Crank--Nicolson method. In order to avoid it, we may choose numerical initial data as a numerical solution of the algebraic equations. An alternative way is to take only one time step with the backward Euler method with very small time-step size and to continue time stepping with the Crank--Nicolson method.

\section{Numerical results} \label{sec:numerical}
We present numerical experiments in this section. For simplicity of presentation, we assume that the poroelastic medium is homogeneous and isotropic, i.e., 
\algn{ \label{eq:stiffness}
\mc{C} \tau &= 2 \mu \tau + \lambda \tr \tau \I, \qquad \tau \in L^2(\Omega; \Bbb{S}), \\
\notag \mc{A}^s \tau &= \frac{1}{2 \mu} \left( \tau - \frac{\lambda}{2 \mu + n \lambda} \tr \tau \I \right), \qquad \tau \in L^2(\Omega; \Bbb{S})
}
with positive constants $\mu$ and $\lambda$.

We set $\Omega = [0,1] \times [0,1]$, and displacement boundary conditions will be given in all examples. In our experiments, we used two combinations of mixed finite elements for elasticity and for mixed Poisson problems. As the first combination we use the lowest order Arnold--Falk--Winther (${\rm AFW}_1$) element \cite{AFW07} for linear elasticity and the lowest order Raviart--Thomas (${\rm RT}_1$) element for mixed Poisson problem. \tred{Note that we use the indices of Raviart--Thomas elements in the FEniCS package (cf. \cite{fenicsbook}), which may be different from other literature}. The lowest order AFW element has the lowest order Brezzi--Douglas--Marini element \cite{BDM85} in each row of $\Sigma_h$, $\V$-valued discontinuous piecewise constant polynomials as $V_h$, and $\K$-valued discontinuous piecewise constant polynomials as $\Gamma_h$. Another combination is the lowest order Taylor--Hood based (${\rm THB}_1$) element for linear elasticity and the second lowest order RT element (${\rm RT}_2$) for mixed Poisson problem. The lowest order Taylor--Hood based element \cite{Lee14a} has the lowest order BDM element in each row of $\Sigma_h$, $\V$-valued piecewise constant polynomials as $V_h$, and $\K$-valued continuous piecewise linear polynomials as $\Gamma_h$. For the stability analysis of the ${\rm AFW}_1$ and ${\rm THB}_1$ elements, we refer to \cite{AFW07,MR2449101,FalkWS,Guzman11,Lee14a}.

In our numerical experiments the mesh is structured with mesh size $h$ and we take the backward Euler time discretization with time-step size $0 < \lap t < 1$. We set $\lap t = h^2$ or $\lap t = h^3$ in order to make the convergence rate of the time discretization errors higher than the one of the spatial discretization errors, so we can compare the convergence rates of the spatial discretization errors ignoring influences of time discretization errors. The expected convergence rates of the errors from the theoretical analysis are summarized in Table \ref{err-rate}. All numerical experiments are implemented using Dolfin, the Python interface of the FEniCS package \cite{fenicsbook}.

\begin{table}[!h]  
\caption[Expected convergence rates]{Finite element spaces for unknowns (${\rm BDM}_k$ : the $k$-th lowest order Brezzi--Douglas--Marini element, ${\rm RT}_k$ : the $k$-th lowest order Raviart--Thomas element, ${\rm CG}_k$ : the Lagrange finite element with polynomials of degree $\leq k$, ${\rm DG}_k$ : the finite element with discontinuous polynomials of degree $\leq k$)} \label{fem-spaces}
\centering
\begin{tabular}{>{\small}c >{\small}c >{\small}c >{\small}c >{\small}c >{\small}c >{\small}c >{\small}c >{\small}c >{\small}c >{\small}c }
\hline
& {$\Sigma_h$} & {$V_h$} & {$\Gamma_h$} & {$W_h$} & {$Q_h$} \\ \hline
\hline
{Element 1} & ${\rm BDM}_1$ & ${\rm DG}_0$ & ${\rm DG}_0$ & ${\rm RT}_1$ &  ${\rm DG}_0$ \\ 
{Element 2}& ${\rm BDM}_1$ & ${\rm DG}_0$ & ${\rm CG}_1$ & ${\rm RT}_2$ &  ${\rm DG}_1$ \\ \hline 
\end{tabular}         
\end{table}

\begin{table}[!h]  
\caption[Expected convergence rates]{Expected (spatial) convergence rates} \label{err-rate}
\centering
\begin{tabular}{>{\small}c >{\small}c >{\small}c >{\small}c >{\small}c >{\small}c >{\small}c >{\small}c >{\small}c >{\small}c >{\small}c >{\small}c}
\hline
{error} & {$\sigma - \sigma_h$} & {$ u - u_h $} & {$ u - u_h^* $} & {$ \gamma - \gamma_h$ } & {$ z - z_h $} & {$ p - p_h $} \\ \hline
{norm} & ${L^\infty L^2}$ & $L^\infty L^2$ & ${L^\infty L^2}$ &  ${L^\infty L^2}$ &  ${L^\infty L^2}$  &  ${L^\infty L^2}$ \\
\hline
{Element 1} & 1 & 1 & 1 & 1 & 1 & 1 \\ 
{Element 2} & 2 & 1 & 2 & 2 & 2 & 2 \\ \hline 
\end{tabular}         
\end{table}

\begin{table}[!h]  
\setlength{\tabcolsep}{4.8pt} 
\caption[Numerical result with Element 1]{Errors and convergence rates of unknowns at $t = 1.0$ with Element 1 for the exact solution with the displacement \eqref{ex1} ($\mu = \lambda = 10$, $s_0 = 1$, $\lap t = h^2$).} \label{AFW-result1}
\begin{tabular}{>{\tiny}c >{\tiny }c >{\tiny}c >{\tiny}c >{\tiny}c >{\tiny}c >{\tiny}c >{\tiny}c >{\tiny}c >{\tiny}c >{\tiny}c >{\tiny}c >{\tiny}c}
\hline
\multirow{2}{*}{$\frac 1h$} & \multicolumn{2}{>{\tiny}c}{$\| \sigma - \sigma_h \|$} & \multicolumn{2}{>{\tiny}c}{$\| u - u_h \|$} & \multicolumn{2}{>{\tiny}c}{$\| u - u_h^* \|$} & \multicolumn{2}{>{\tiny}c}{$\| \gamma - \gamma_h \|$} & \multicolumn{2}{>{\tiny}c}{$\| z - z_h \|$} & \multicolumn{2}{>{\tiny}c}{$\| p - p_h \|$} \\ 
 & error & rate & error & rate & error & rate& error & rate& error & rate& error & rate\\ \hline
4 & 5.93e+0 &  --  & 1.92e--1 &  --  & 1.10e--1 &  --  & 2.50e--1 &  --  & 1.51e--1 &  --  & 6.24e--2 &  -- \\
8 & 2.37e+0 & 1.32 & 8.94e--2 & 1.10 & 3.54e--2 & 1.64 & 1.16e--1 & 1.11 & 7.78e--2 & 0.95 & 3.19e--2 & 0.97\\
16& 1.10e+0 & 1.11 & 4.26e--2 & 1.07 & 1.03e--2 & 1.78 & 5.53e--2 & 1.06 & 3.90e--2 & 1.00 & 1.55e--2 & 1.05\\
32& 5.40e--1 & 1.03 & 2.09e--2 & 1.03 & 2.77e--3 & 1.90 & 2.72e--2 & 1.02 & 1.95e--2 & 1.00 & 7.59e--3 & 1.03\\
64& 2.69e--1 & 1.00 & 1.04e--2 & 1.01 & 7.15e--4 & 1.95 & 1.35e--2 & 1.01 & 9.75e--3 & 1.00 & 3.77e--3 & 1.01\\ \hline
\end{tabular}
\end{table}

\begin{table}[!h]  
\setlength{\tabcolsep}{4.8pt} 
\caption[Numerical result with Element 2]{Errors and convergence rates of unknowns at $t = 1.0$ with Element 2 for the exact solution with the displacement \eqref{ex1} ($\mu = \lambda = 10$, $s_0 = 1$, $\lap t = h^3$).} \label{THB-result1}
\begin{tabular}{>{\tiny}c >{\tiny }c >{\tiny}c >{\tiny}c >{\tiny}c >{\tiny}c >{\tiny}c >{\tiny}c >{\tiny}c >{\tiny}c >{\tiny}c >{\tiny}c >{\tiny}c}
\hline
\multirow{2}{*}{$\frac 1h$} & \multicolumn{2}{>{\tiny}c}{$\| \sigma - \sigma_h \|$} & \multicolumn{2}{>{\tiny}c}{$\| u - u_h \|$} & \multicolumn{2}{>{\tiny}c}{$\| u - u_h^* \|$} & \multicolumn{2}{>{\tiny}c}{$\| \gamma - \gamma_h \|$} & \multicolumn{2}{>{\tiny}c}{$\| z - z_h \|$} & \multicolumn{2}{>{\tiny}c}{$\| p - p_h \|$} \\ 
 & error & rate & error & rate & error & rate& error & rate& error & rate& error & rate\\ \hline
4 & 3.19e+0 &  --  & 1.62e--1 &  --  & 3.63e--2 &  --  & 2.23e--1 &  --  & 2.61e--2 &  --  & 1.09e--2 &  -- \\
8 & 7.97e--1 & 2.00 & 8.26e--2 & 0.98 & 8.13e--3 & 2.16 & 4.20e--2 & 2.41 & 7.08e--3 & 1.88 & 2.84e--3 & 1.93\\
16& 2.03e--1 & 1.97 & 4.14e--2 & 1.00 & 2.17e--3 & 1.91 & 9.63e--3 & 2.13 & 1.87e--3 & 1.92 & 7.61e--4 & 1.90\\
32& 5.14e--2 & 1.98 & 2.07e--2 & 1.00 & 5.55e--4 & 1.96 & 2.34e--3 & 2.04 & 4.77e--4 & 1.97 & 2.00e--4 & 1.93\\
64& 1.29e--2 & 1.99 & 1.04e--2 & 1.00 & 1.40e--4 & 1.99 & 5.79e--4 & 2.01 & 1.20e--4 & 1.99 & 5.13e--5 & 1.96\\ \hline
\end{tabular}
\end{table}

\begin{table}[!h]  
\setlength{\tabcolsep}{4.8pt} 
\caption[Numerical result with Element 1]{Errors and convergence rates of unknowns at $t = 1.0$ with Element 1 for the exact solution with the displacement \eqref{ex1} ($\mu = \lambda = 10$, $s_0 = 10^{-3}$, $\lap t = h^2$).} \label{AFW-result2}
\begin{tabular}{>{\tiny}c >{\tiny }c >{\tiny}c >{\tiny}c >{\tiny}c >{\tiny}c >{\tiny}c >{\tiny}c >{\tiny}c >{\tiny}c >{\tiny}c >{\tiny}c >{\tiny}c}
\hline
\multirow{2}{*}{$\frac 1h$} & \multicolumn{2}{>{\tiny}c}{$\| \sigma - \sigma_h \|$} & \multicolumn{2}{>{\tiny}c}{$\| u - u_h \|$} & \multicolumn{2}{>{\tiny}c}{$\| u - u_h^* \|$} & \multicolumn{2}{>{\tiny}c}{$\| \gamma - \gamma_h \|$} & \multicolumn{2}{>{\tiny}c}{$\| z - z_h \|$} & \multicolumn{2}{>{\tiny}c}{$\| p - p_h \|$} \\ 
 & error & rate & error & rate & error & rate& error & rate& error & rate& error & rate\\ \hline
4 & 5.93e+00 &  --  & 1.92e--1 &  --  & 1.10e--1 &  --  & 2.50e--1 &  --  & 1.67e--1 &  --  & 7.82e--2 &  -- \\
8 & 2.37e+00 & 1.32 & 8.94e--2 & 1.10 & 3.54e--2 & 1.64 & 1.16e--1 & 1.11 & 8.12e--2 & 1.05 & 3.55e--2 & 1.14\\
16& 1.10e+00 & 1.11 & 4.26e--2 & 1.07 & 1.03e--2 & 1.78 & 5.53e--2 & 1.06 & 3.95e--2 & 1.04 & 1.60e--2 & 1.15\\
32& 5.40e-01 & 1.03 & 2.09e--2 & 1.03 & 2.77e--3 & 1.90 & 2.72e--2 & 1.02 & 1.96e--2 & 1.01 & 7.66e--3 & 1.06\\
64& 2.69e-01 & 1.00 & 1.04e--2 & 1.01 & 7.14e--4 & 1.95 & 1.35e--2 & 1.01 & 9.76e--3 & 1.00 & 3.78e--3 & 1.02\\ \hline
\end{tabular}
\end{table}

\begin{table}[!h]  
\setlength{\tabcolsep}{4.8pt} 
\caption[Numerical result with Element 2]{Errors and convergence rates of unknowns at $t = 1.0$ with Element 2 for the exact solution with the displacement \eqref{ex1} ($\mu = \lambda = 10$, $s_0 = 10^{-3}$, $\lap t = h^3$).} \label{THB-result2}
\begin{tabular}{>{\tiny}c >{\tiny }c >{\tiny}c >{\tiny}c >{\tiny}c >{\tiny}c >{\tiny}c >{\tiny}c >{\tiny}c >{\tiny}c >{\tiny}c >{\tiny}c >{\tiny}c}
\hline
\multirow{2}{*}{$\frac 1h$} & \multicolumn{2}{>{\tiny}c}{$\| \sigma - \sigma_h \|$} & \multicolumn{2}{>{\tiny}c}{$\| u - u_h \|$} & \multicolumn{2}{>{\tiny}c}{$\| u - u_h^* \|$} & \multicolumn{2}{>{\tiny}c}{$\| \gamma - \gamma_h \|$} & \multicolumn{2}{>{\tiny}c}{$\| z - z_h \|$} & \multicolumn{2}{>{\tiny}c}{$\| p - p_h \|$} \\ 
 & error & rate & error & rate & error & rate& error & rate& error & rate& error & rate\\ \hline
4 & 3.19e+ 0 &  --  & 1.62e--1 &  --  & 3.63e--2 &  --  & 2.23e--1 &  --  & 2.65e--2 &  --  & 1.24e--2 &  -- \\
8 & 7.97e--1 & 2.00 & 8.26e--2 & 0.98 & 8.13e--3 & 2.16 & 4.20e--2 & 2.41 & 6.68e--3 & 1.99 & 2.97e--3 & 2.06\\
16& 2.03e--1 & 1.97 & 4.14e--2 & 1.00 & 2.17e--3 & 1.91 & 9.63e--3 & 2.13 & 1.75e--3 & 1.93 & 7.37e--4 & 2.01\\
32& 5.14e--2 & 1.98 & 2.07e--2 & 1.00 & 5.55e--4 & 1.96 & 2.34e--3 & 2.04 & 4.46e--4 & 1.98 & 1.85e--4 & 2.00\\
64& 1.29e--2 & 1.99 & 1.04e--2 & 1.00 & 1.40e--4 & 1.99 & 5.79e--4 & 2.01 & 1.12e--4 & 1.99 & 4.63e--5 & 2.00\\ \hline
\end{tabular}
\end{table}

\begin{table}[!h] 
\setlength{\tabcolsep}{4.8pt} 
\caption[Numerical result with Element 1]{Errors and convergence rates of unknowns at $t = 1.0$ with Element 1 for the exact solution with the displacement \eqref{ex1} ($\mu = \lambda = 10$, $s_0 = 0$, $\lap t = h^3$).} \label{AFW-result3}
\begin{tabular}{>{\tiny}c >{\tiny }c >{\tiny}c >{\tiny}c >{\tiny}c >{\tiny}c >{\tiny}c >{\tiny}c >{\tiny}c >{\tiny}c >{\tiny}c >{\tiny}c >{\tiny}c}
\hline
\multirow{2}{*}{$\frac 1h$} & \multicolumn{2}{>{\tiny}c}{$\| \sigma - \sigma_h \|$} & \multicolumn{2}{>{\tiny}c}{$\| u - u_h \|$} & \multicolumn{2}{>{\tiny}c}{$\| u - u_h^* \|$} & \multicolumn{2}{>{\tiny}c}{$\| \gamma - \gamma_h \|$} & \multicolumn{2}{>{\tiny}c}{$\| z - z_h \|$} & \multicolumn{2}{>{\tiny}c}{$\| p - p_h \|$} \\ 
 & error & rate & error & rate & error & rate& error & rate& error & rate& error & rate\\ \hline
4 & 5.93e+0  &  --  & 1.92e--1 &  --  & 1.10e--1 &  --  & 2.50e--1 &  --  & 1.67e--1 &  --  & 7.82e--2 &  -- \\
8 & 2.37e+0  & 1.32 & 8.93e--2 & 1.10 & 3.54e--2 & 1.64 & 1.15e--1 & 1.11 & 8.12e--2 & 1.05 & 3.55e--2 & 1.14\\
16& 1.10e+0  & 1.11 & 4.26e--2 & 1.07 & 1.03e--2 & 1.78 & 5.53e--2 & 1.06 & 3.95e--2 & 1.04 & 1.60e--2 & 1.15\\
32& 5.40e--1 & 1.03 & 2.09e--2 & 1.03 & 2.77e--3 & 1.90 & 2.72e--2 & 1.02 & 1.96e--2 & 1.01 & 7.66e--3 & 1.06\\
64& 2.69e--1 & 1.00 & 1.04e--2 & 1.01 & 7.14e--4 & 1.95 & 1.35e--2 & 1.01 & 9.76e--3 & 1.00 & 3.78e--3 & 1.02\\ \hline
\end{tabular}
\end{table}

\begin{table}[!h]
\setlength{\tabcolsep}{4.8pt} 
\caption[Numerical result with Element 2]{Errors and convergence rates of unknowns at $t = 1.0$ with Element 2 for the exact solution with the displacement \eqref{ex1} ($\mu = \lambda = 10$, $s_0 = 0$, $\lap t = h^3$).} \label{THB-result3}
\begin{tabular}{>{\tiny}c >{\tiny }c >{\tiny}c >{\tiny}c >{\tiny}c >{\tiny}c >{\tiny}c >{\tiny}c >{\tiny}c >{\tiny}c >{\tiny}c >{\tiny}c >{\tiny}c}
\hline
\multirow{2}{*}{$\frac 1h$} & \multicolumn{2}{>{\tiny}c}{$\| \sigma - \sigma_h \|$} & \multicolumn{2}{>{\tiny}c}{$\| u - u_h \|$} & \multicolumn{2}{>{\tiny}c}{$\| u - u_h^* \|$} & \multicolumn{2}{>{\tiny}c}{$\| \gamma - \gamma_h \|$} & \multicolumn{2}{>{\tiny}c}{$\| z - z_h \|$} & \multicolumn{2}{>{\tiny}c}{$\| p - p_h \|$} \\ 
 & error & rate & error & rate & error & rate& error & rate& error & rate& error & rate\\ \hline
4 & 3.19e+0  &  --  & 1.62e--1 &  --  & 3.63e--2 &  --  & 2.23e--1 &  --  & 2.65e--2 &  --  & 1.24e--2 &  -- \\
8 & 7.97e--1 & 2.00 & 8.26e--2 & 0.98 & 8.13e--3 & 2.16 & 4.20e--2 & 2.41 & 6.68e--3 & 1.99 & 2.97e--3 & 2.07\\
16& 2.03e--1 & 1.97 & 4.14e--2 & 1.00 & 2.17e--3 & 1.91 & 9.63e--3 & 2.13 & 1.75e--3 & 1.93 & 7.37e--4 & 2.01\\
32& 5.14e--2 & 1.98 & 2.07e--2 & 1.00 & 5.55e--4 & 1.96 & 2.34e--3 & 2.04 & 4.46e--4 & 1.98 & 1.85e--4 & 2.00\\
64& 1.29e--2 & 1.99 & 1.04e--2 & 1.00 & 1.40e--4 & 1.99 & 5.79e--4 & 2.01 & 1.12e--4 & 1.99 & 4.63e--5 & 2.00\\ \hline
\end{tabular}
\end{table}

\begin{eg} \normalfont
For the displacement and pressure 
\begin{align} \label{ex1}
u(t,x,y) =
\pmat{ x \cos (t) \\
 (1+y^2) \cos(t+1) \sin(\pi x) 
} \quad \text{and}\quad p(t,x,y) = x^2 y \cos(t^2), 
\end{align}
the stiffness tensor \eqref{eq:stiffness} and given parameters $\mu$, $\lambda$, $\kappa$, $\alpha$, one can compute $\sigma$, $z$, $\gamma$, $f$, and $g$, using the equations \eqref{eq:4-field-eq1}--\eqref{eq:4-field-eq4} and the definition of $\gamma$. We compute a numerical solution of this exact solution with inhomogeneous displacement boundary conditions using the formulation \eqref{eq:weak-eq1}--\eqref{eq:weak-eq4}. The numerical results with Element~1 $({\rm AFW}_1$ and ${\rm RT}_1)$ and Element 2 $({\rm THB}_1$ and ${\rm RT}_2)$ are given through {\rm Table\tblu{s}~\ref{AFW-result1}}, 4, 5 and 6. The parameter values, mesh and time-step sizes are explained in the tables.

\begin{table}[ht]  \label{table:rel-locking}
\caption{Relative $L^2$ errors and convergence rates of $\sigma$, $u$, and $z$ for large $\lambda$ values}
\centering
\begin{tabular}{>{\small}c >{\small}c | >{\small}c >{\small}c >{\small}c >{\small}c >{\small}c >{\small}c }
\hline 
\multirow{2}{*}{$\lambda$} & \multirow{2}{*}{$\frac 1h$} & \multicolumn{2}{>{\small }c}{$\| \sigma - \sigma_h \|/ \| \sigma \|$} & \multicolumn{2}{>{\small }c}{$\| u - u_h \|/ \| u \| $} & \multicolumn{2}{>{\small }c}{$\| z - z_h \|/ \| z \|$} \\ 
& & error & rate& error & rate& error & rate\\ \hline
\hline
\multirow{4}{*}{$10^1$} 
& $ 4$ & 4.51e--01  &  --  & 3.46e-01  &  --  & 4.09e-01 &  --  \\ 
& $ 8$ & 2.94e--01  & 0.62 & 1.53e-01  &  1.17& 2.24e-01 & 0.87 \\ 
& $16$ & 1.80e--01  & 0.71 & 5.94e-02  &  1.37& 1.14e-01 & 0.98 \\  
& $32$ & 1.04e--01  & 0.78 & 2.11e-02  &  1.49& 5.52e-02 & 1.04 \\ \hline \multirow{4}{*}{$10^4$}
& $ 4$ & 6.23e--01  &  --  & 5.88e-01  &   -- & 4.82e-01 &  --  \\ 
& $ 8$ & 4.62e--01  & 0.43 & 3.19e-01  &  0.88& 2.54e-01 & 0.92 \\ 
& $16$ & 3.15e--01  & 0.55 & 1.50e-01  &  1.09& 1.26e-01 & 1.01 \\  
& $32$ & 1.98e--01  & 0.67 & 6.12e-02  &  1.29& 5.98e-02 & 1.07 \\ \hline \multirow{4}{*}{$10^7$}
& $ 4$ & 6.24e--01  &  --  & 5.89e-01  &   -- & 4.82e-01 &  --  \\ 
& $ 8$ & 4.63e--01  & 0.43 & 3.20e-01  &  0.88& 2.55e-01 & 0.92 \\ 
& $16$ & 3.16e--01  & 0.55 & 1.50e-01  &  1.09& 1.26e-01 & 1.01 \\  
& $32$ & 1.98e--01  & 0.67 & 6.14e-02  &  1.29& 5.98e-02 & 1.07 \\ \hline \multirow{4}{*}{$10^{10}$}
& $ 4$ & 6.24e--01  &  --  & 5.89e-01  &   -- & 4.82e-01 &  --  \\ 
& $ 8$ & 4.63e--01  & 0.43 & 3.20e-01  &  0.88& 2.55e-01 & 0.92 \\ 
& $16$ & 3.16e--01  & 0.55 & 1.50e-01  &  1.09& 1.26e-01 & 1.01 \\  
& $32$ & 1.98e--01  & 0.67 & 6.14e-02  &  1.29& 5.98e-02 & 1.07 \\ \hline 
\end{tabular}                           
\end{table}

In {\rm Table~\ref{AFW-result1}} and {\rm Table~\ref{AFW-result2}}, with different values of $s_0$, we carried out the local post-processing in the previous section. The post-processed solutions show second order convergence but this superconvergence is not covered in the error analysis. Numerical experiments of the same exact solutions with Element~2 are presented in {\rm Table~\ref{THB-result1}} and {\rm Table~\ref{THB-result2}} and all the convergence rates are in agreement with the expected convergence rates. In {\rm Table~\ref{AFW-result3}} and {\rm Table~\ref{THB-result3}}, we carried out numerical experiments for an exact solution with $s_0 = 0$, and can see that convergence rates are not influenced by this vanishing $s_0$.
\end{eg}

\begin{eg} \normalfont
In order to illustrate that our methods are robust for nearly incompressible materials, we consider a problem on $\Omega = [0,1] \times [0,1]$ with 
\algns{ f = 
\pmat{ x y \\ \sin t
}, \quad \mu = 10, \quad \kappa = 1, \quad s_0 = 10^{-3} ,
}
and boundary conditions
\algns{ 
\sigma \bs{n} = 0, \quad z \cdot \bs{n} = 0, \qquad \text{ on } \Gamma = \{ (x,y) \in \R^2 \,:\, y < 1 \}. 
}
Since we do not know the exact solution, we compute a numerical solution with \tblu{the mesh} of $\Omega$ bisecting $128 \times 128$ rectangles, and use it to compute the errors of other numerical solutions with coarser meshes. For simplicity, we use Element~1 in Table~\ref{err-rate}. We present relative $L^2$ errors and convergence rates of $\sigma$, $u$, $z$ for different $\lambda$ values and mesh refinements in Table~\ref{table:rel-locking}. Due to the limit of computational resources, our numerical experiments did not reach the asymptotic regime of convergence rates but they clearly show that relative $L^2$ errors of $\sigma$, $u$, $z$, are not influenced by large $\lambda$ values. 
\end{eg}

\section{Conclusion} \label{sec:conclusion}
In the paper, we propose a new finite element method for Biot's consolidation model and show the a priori error estimates of semidiscrete problems. In particular, our error estimates do not require strictly positive $s_0$, and they are robust for nearly incompressible materials.  
We illustrate the validity of our analysis by numerical experiments. 

\providecommand{\bysame}{\leavevmode\hbox to3em{\hrulefill}\thinspace}
\providecommand{\MR}{\relax\ifhmode\unskip\space\fi MR }
\providecommand{\MRhref}[2]{%
  \href{http://www.ams.org/mathscinet-getitem?mr=#1}{#2}
}
\providecommand{\href}[2]{#2}

\providecommand{\bysame}{\leavevmode\hbox to3em{\hrulefill}\thinspace}
\providecommand{\MR}{\relax\ifhmode\unskip\space\fi MR }
\providecommand{\MRhref}[2]{%
  \href{http://www.ams.org/mathscinet-getitem?mr=#1}{#2}
}

\vspace{.125in}

\end{document}